\DeclareMathOperator\sep{Sep}
\DeclareMathOperator\spn{Span}
 \DeclareMathOperator{\actson}{\hspace{-1pt}\curvearrowright}
\newcommand{\htop}{h^{\mathrm{tp}}_{\mathrm{nv}}}
\newcommand{\hnv}{h_{\mathrm{nv}}}
\newcommand{\rest}{\upharpoonright}
\newtheorem{proposition}{Proposition}[section]
\newtheorem*{theorem*}{Theorem}
\newtheorem{theorem}{Theorem}[section]
\newtheorem{corollary}{Corollary}[section]
\newtheorem{definition}{Definition}[section]
\newtheorem{example}{Example}[section]
\newtheorem{lemma}{Lemma}[section]
\numberwithin{equation}{section}
\begin{document}

\title{Naive entropy of dynamical systems}

\author{Peter Burton \footnote{Research partially supported by NSF grant DMS-0968710}}

\date{\today}

\maketitle

\begin{abstract} We study an invariant of dynamical systems called naive entropy, which is defined for both measurable and topological actions of any countable group. We focus on nonamenable groups, in which case the invariant is two-valued, with every system having naive entropy either zero or infinity. Bowen has conjectured that when the acting group is sofic, zero naive entropy implies sofic entropy at most zero for both types of systems. We prove the topological version of this conjecture by showing that for every action of a sofic group by homeomorphisms of a compact metric space, zero naive entropy implies sofic entropy at most zero. This result and the simple definition of naive entropy allow us to show that the generic action of a free group on the Cantor set has sofic entropy at most zero. We observe that a distal $\Gamma$-system has zero naive entropy in both senses, if $\Gamma$ has an element of infinite order. We also show that the naive entropy of a topological system is greater than or equal to the naive measure entropy of the same system with respect to any invariant measure.  \end{abstract}

\section{Introduction.}

A fundamental aspect of the theory of dynamical systems is the invariant known as entropy. Defined for both measurable and topological systems, this is a nonnegative real number which quantifies how random the given dynamics are. Entropy was introduced for measurable $\mathbb{Z}$-systems by Kolmogorov in \cite{Ko59} and Sinai in \cite{Sa59} and for topological $\mathbb{Z}$-systems by Adler, Konheim and McAndrew in \cite{AKM65}. In \cite{OrWe87}, Ornstein and Weiss extended much of entropy theory from $\mathbb{Z}$-systems to $\Gamma$-systems for amenable groups $\Gamma$. More recently, there has been significant progress in creating ideas of entropy for systems where the acting group is nonamenable. The most significant aspect of this new work is Bowen's theory of sofic entropy, initially developed by him for measurable systems in the papers \cite{Bow10}, \cite{Bow12a}, \cite{Bow14a} and \cite{Bow14}, and further developed for both types of systems by Kerr and Li in \cite{KeLi11a}, \cite{KeLi11},\cite{KeLi13a} and \cite{KeLi13} and by Kerr in \cite{Ke13} and \cite{Ke15}. Another thread is the concept of Rokhlin entropy, developed for measurable systems by Seward in \cite{Sew12}, \cite{Sew14a} and \cite{Sew14b}. In this paper we begin to study a third notion of entropy for general systems, called naive entropy. This idea was suggested by Bowen in \cite{Bow14} as the most direct way of generalizing the definition for $\mathbb{Z}$-systems. While he considered only the measurable context, a similar definition can be made for topological systems.\\
\\
Following an observation of Bowen, we show that if $\Gamma$ is a nonamenable countable group then any topological or measurable $\Gamma$-system has naive entropy either $0$ or $\infty$. Thus for nonamenable groups naive entropy is best understood as a dichotomy rather than an invariant. A natural question is to what extent the dichotomy between zero and infinite naive entropy corresponds to the dichotomy between zero and positive sofic entropy. Bowen has conjectured in \cite{Bow14} that zero naive entropy implies sofic entropy at most zero. In Section \ref{sec4} we prove the following topological version of this conjecture. Here $\htop$ is the naive topological entropy and $h^{\mathrm{tp}}_\Sigma$ is the sofic entropy with respect to a sofic approximation $\Sigma$.

\begin{theorem} \label{thm1} Let $\Gamma$ be a sofic group, let $\Gamma \actson X$ be a topological $\Gamma$-system and let $\Sigma$ be a sofic approximation to $\Gamma$. If $\htop(\Gamma \actson X) = 0$ then $h^{\mathrm{tp}}_{\Sigma}(\Gamma \actson X) \leq 0$. \end{theorem}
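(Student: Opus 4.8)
The plan is to argue directly with the Kerr--Li microstate description of sofic topological entropy. Fix a compatible metric $\rho$ on $X$ and write $\sigma_i\colon \Gamma \to \mathrm{Sym}(d_i)$ for the maps constituting $\Sigma$; recall that
\[
h^{\mathrm{tp}}_{\Sigma}(\Gamma\actson X)=\sup_{\varepsilon>0}\ \inf_{F}\ \inf_{\delta>0}\ \limsup_{i\to\infty}\frac1{d_i}\log N_{\varepsilon}\!\left(\mathrm{Map}(\rho,F,\delta,\sigma_i),\rho_\infty\right),
\]
where $F$ runs over finite subsets of $\Gamma$ containing $e$ and $N_\varepsilon(\,\cdot\,,\rho_\infty)$ is the maximal cardinality of a $\rho_\infty$-separated subset. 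We may assume $\Gamma$ is infinite (if $\Gamma$ is finite then $\htop(\Gamma\actson X)=0$ forces $X$ to be a single point) and that $\mathrm{diam}_\rho X\ge\varepsilon$ (otherwise every microstate is within $\rho_\infty$-distance $\varepsilon$ of every other, so nothing is to be proved for that $\varepsilon$). Fixing $\varepsilon>0$, it then suffices to show that for every $\eta>0$ there are a finite $F\ni e$ and $\delta>0$ with $\limsup_i \frac1{d_i}\log N_\varepsilon(\mathrm{Map}(\rho,F,\delta,\sigma_i),\rho_\infty)\le\eta$. Pick a finite open cover $\mathcal U=\{U_1,\dots,U_k\}$ of $X$ of mesh $<\varepsilon/10$ and with Lebesgue number $\lambda\in(0,\varepsilon/10)$. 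Since $\htop(\Gamma\actson X)=0$ we have $\inf_F\frac1{|F|}\log N(\mathcal U^F)=0$, so given any $\eta'>0$ we may fix a finite $F_0\ni e$ with $N(\mathcal U^{F_0})<e^{\eta'|F_0|}$; note $N(\mathcal U^{F_0})\ge 2$ since $\mathrm{diam}_\rho X\ge\varepsilon$. Let $\{V_1,\dots,V_N\}$, $N=N(\mathcal U^{F_0})$, be a minimal subcover of $\mathcal U^{F_0}$, with $V_r=\bigcap_{g\in F_0}g^{-1}U_{c_r(g)}$.

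The key point is that each $\varphi\in\mathrm{Map}(\rho,F_0,\delta,\sigma_i)$ is determined up to $\rho_\infty$-distance $<\varepsilon$ by a short code. Call $a$ \emph{$\varphi$-good} if $\rho(\varphi(\sigma_i(g)a),g\varphi(a))\le\lambda/2$ for all $g\in F_0$; by Chebyshev's inequality at most $4\delta^2|F_0|\lambda^{-2}d_i$ indices are not $\varphi$-good. For a $\varphi$-good $a$ choose $r(a)$ with $\varphi(a)\in V_{r(a)}$; then $g\varphi(a)\in U_{c_{r(a)}(g)}$, so $\varphi(\sigma_i(g)a)$ lies in the $(\lambda/2)$-neighbourhood of $U_{c_{r(a)}(g)}$, a set of diameter $<\varepsilon/5$. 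Hence the restriction of $\varphi$ to $F_0\cdot a:=\{\sigma_i(g)a:g\in F_0\}$ is pinned down, coordinatewise and up to error $\varepsilon/5$, to one of just $N$ configurations, labelled by $r(a)$. Therefore, if $A\subseteq\{1,\dots,d_i\}$ is chosen so that $\bigcup_{a\in A}F_0\cdot a$ omits only a small set $B_A$, then $\varphi$ is recovered up to $\rho_\infty<\varepsilon$ from the code consisting of: for each $a\in A$, either $r(a)$ or a flag that $a$ is not $\varphi$-good; and the values of $\varphi$ on the exceptional set $E_\varphi:=B_A\cup\bigcup\{F_0\cdot a: a\in A,\ a\text{ not }\varphi\text{-good}\}$, recorded to within $\varepsilon/10$ using a fixed $(\varepsilon/10)$-net of $X$ of cardinality $M$. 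This yields $N_\varepsilon(\mathrm{Map}(\rho,F_0,\delta,\sigma_i),\rho_\infty)\le (N+1)^{|A|}\,M^{|E_\varphi|}$, with $|E_\varphi|\le |B_A|+|F_0|\cdot 4\delta^2|F_0|\lambda^{-2}d_i$.

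It remains to choose $A$ efficiently and to fix parameters. A single finite $F_0$ need not tile the sofic approximation, so we take $A$ at random, each index lying in $A$ independently with probability $p=1-(\beta/2)^{1/|F_0|}\le|F_0|^{-1}\ln(2/\beta)$ for a small $\beta>0$. Asymptotic freeness of $\Sigma$ guarantees that for large $i$ all but $o(d_i)$ indices $b$ satisfy $|\{\sigma_i(g)^{-1}b:g\in F_0\}|=|F_0|$, and any such $b$ is missed by $\bigcup_{a\in A}F_0\cdot a$ with probability $(1-p)^{|F_0|}=\beta/2$; a first-moment bound and Markov's inequality then supply, for all large $i$, a set $A$ with $|A|\le 4|F_0|^{-1}\ln(2/\beta)\,d_i$ and $|B_A|\le 4\beta d_i$. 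Choosing $\delta\le \lambda\sqrt\beta/(2|F_0|)$ makes the second term in the bound on $|E_\varphi|$ at most $\beta d_i$, so $|E_\varphi|\le 5\beta d_i$; using $N\ge 2$, hence $N+1\le N^2$, together with $N<e^{\eta'|F_0|}$ we get, for all large $i$,
\[
\frac1{d_i}\log N_\varepsilon(\mathrm{Map}(\rho,F_0,\delta,\sigma_i),\rho_\infty)\ \le\ \frac{8\ln(2/\beta)}{|F_0|}\,\log N+5\beta\log M\ \le\ 8\eta'\ln(2/\beta)+5\beta\log M.
\]
Now fix $\beta$ with $5\beta\log M\le\eta/2$, then $\eta'$ with $8\eta'\ln(2/\beta)\le\eta/2$, then $F_0$ and finally $\delta$ as above. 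Passing to the limit in $i$ gives the desired inequality, and since $\varepsilon$ and $\eta$ were arbitrary this yields $h^{\mathrm{tp}}_{\Sigma}(\Gamma\actson X)\le 0$.

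The step I expect to be the main obstacle is the covering of $\{1,\dots,d_i\}$ by translates $F_0\cdot a$ together with the attendant bookkeeping: one must confirm that the non-good indices, the uncovered indices, and the overlaps between the translates jointly form an exceptional set of density $O(\beta)$, so that its free-coordinate contribution $M^{O(\beta)d_i}$ is negligible, while simultaneously keeping $|A|$ of order $\ln(\beta^{-1})\,d_i/|F_0|$, so that the main term $(N+1)^{|A|}$ stays below $e^{O(\eta')d_i}$. The whole reason for introducing $F_0$ through naive entropy is precisely that $\log N$ may be taken at most $\eta'|F_0|$, which is what renders this main term harmless. Everything else, namely the Chebyshev estimate, the translation between the cover $\mathcal U$ and the metric $\rho$, the net argument, and the elementary properties of $\mathrm{Map}$ and of $N_\varepsilon$, is either routine or standard Kerr--Li material.
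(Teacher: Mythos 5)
Your proposal is correct, and it reaches Theorem \ref{thm1} by a genuinely different implementation of the same underlying idea: use approximate equivariance to pin down a microstate on an approximate translate $\{\gamma^\sigma a : \gamma \in F_0\}$ by data at the single point $a$, use $\htop(\Gamma \actson X)=0$ to make the number of possible patterns per translate subexponential in $|F_0|$, and treat a small leftover set of coordinates crudely. The differences are real, though. First, you work with the open-cover form of naive entropy directly (a minimal subcover of $\mathscr{U}^{F_0}$ plus a mesh/Lebesgue-number argument), whereas the paper first passes to the Bowen--Dinaburg reformulation via separated sets in the metrics $d_{F'}$ (Proposition \ref{prop5}) and needs the sub-block estimate of Lemma \ref{lem4} for subsets $F' \subseteq F$ with $|F'| \geq |F|/k$. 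Second, you cover $[d_i]$ by a \emph{random overlapping} family of translates $F_0 \cdot a$, $a \in A$, of multiplicity roughly $\ln(1/\beta)$, and absorb that factor by choosing $\eta'$ after $\beta$; the paper instead builds, by a greedy maximality argument with a double-counting step, a \emph{disjoint} packing by partial translates $F_i^\sigma c_i$ covering all but $\eta n$ points, which yields a clean product bound over blocks with no logarithmic loss. Third, you handle the $\phi$-dependent set of non-equivariant indices by putting flags into the code and recording values on the resulting exceptional set via an $(\epsilon/10)$-net, whereas the paper pigeonholes over the possible bad sets $\Theta_\phi$ (the binomial-count term controlled by $(\ref{eq7})$) and then bounds the remainder $P$ by $\sep\left(X,\frac{\epsilon}{2},d\right)^{|P|}$. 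Your route buys a softer combinatorial core (no maximal-packing argument, no sublemma for sub-blocks) at the price of the extra $\ln(1/\beta)$ overlap factor and a slightly more delicate order of quantifiers. Two points you state loosely are routine but should be said precisely: the bound $N_\epsilon \leq (N+1)^{|A|} M^{|E_\phi|}$ must be phrased with the uniform bound $|E_\phi| \leq 5\beta d_i$, since $E_\phi$ depends on $\phi$ and only codes arising from actual microstates are being counted; and the normalization $e \in F_0$ is harmless because replacing $F$ by a right translate does not change the covering number of the joined cover, so the infimum over sets containing $e$ is still $0$.
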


One advantage of naive entropy is that in many cases it is easy to see that a system has zero naive entropy. For example in Section \ref{sec24} we observe that if $\Gamma$ has an element of infinite order, then any distal $\Gamma$-system has zero naive entropy in both senses. This gives a partial answer to a question of Bowen. Furthermore, in Section \ref{sec25} we are able show that if $\Gamma$ is a free group, then the generic $\Gamma$-system with phase space the Cantor set has zero naive topological entropy. More precisely, if $X$ is a compact metric space and $\Gamma$ a countable group, let $\mathrm{A}_{\mathrm{top}}(\Gamma,X)$ denote the Polish space of topological $\Gamma$-systems with phase space $X$. We say a sequence $\left( \Gamma \actson^{a_n} X \right)_{n=1}^\infty \subseteq \mathrm{A}_{\mathrm{top}}(\Gamma,X)$ of $\Gamma$-systems converges to a system $\Gamma \actson^a X$ if for every $\gamma \in \Gamma$ the sequence of homeomorphisms corresponding to $\gamma$ in $a_n$ converges uniformly to the homeomorphism corresponding to $\gamma$ in $a$.

\begin{theorem}\label{thm2} Let $2^\mathbb{N}$ denote the Cantor set and let $\mathbb{F}$ be any countable free group. The set of topological $\mathbb{F}$-systems with zero naive entropy is comeager in $\mathrm{A}_{\mathrm{top}}\left(\mathbb{F},2^\mathbb{N} \right)$. \end{theorem}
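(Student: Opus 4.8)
The plan is to exhibit, for each $n \in \mathbb{N}$, an open dense subset $U_n$ of $\mathrm{A}_{\mathrm{top}}(\mathbb{F}, 2^{\mathbb{N}})$ such that every action in $\bigcap_n U_n$ has zero naive topological entropy, so that the conclusion follows from the Baire category theorem. Since $\mathbb{F}$ is nonamenable (assuming it is nonabelian free; the cyclic case is trivial as $\mathbb{Z}$-systems on the Cantor set can have positive entropy, so presumably the statement intends $\mathbb{F}$ nonabelian — I would first check this edge case), the dichotomy from the earlier part of the paper tells us naive entropy is either $0$ or $\infty$, so it suffices to control the entropy along a single F\o lner-type sequence in a fixed nonamenable subgroup, or even along a single infinite-order element. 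Concretely, pick a generator $s$ of $\mathbb{F}$; the naive entropy computed using the powers of $s$ dominates a quantity of the form $\limsup_{k} \frac{1}{k} \log N(\mathcal{U}, k)$ where $N(\mathcal{U},k)$ is the minimal cardinality of a subcover of $\bigvee_{i=0}^{k-1} a(s)^{-i}\mathcal{U}$. The strategy is to force this to be subexponential for a comeager set of $a$.

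The key step is a perturbation lemma: given any action $a$, any finite open cover $\mathcal{U}$ of $2^{\mathbb{N}}$ by clopen sets, and any $\varepsilon > 0$, I can find an arbitrarily small perturbation $a'$ of $a$ (agreeing with $a$ except possibly on the generators, and only moved slightly in the uniform metric) together with some $k_0$ such that $\frac{1}{k_0}\log N_{a'}(\mathcal{U}, k_0) < \varepsilon$, and moreover this inequality is stable under further small perturbations. The mechanism: refine $\mathcal{U}$ to a clopen partition $\mathcal{P}$ into $2^m$ pieces indexed by length-$m$ binary strings; perturb the homeomorphism $a(s)$ so that it maps $\mathcal{P}$ into itself in a \emph{highly non-expansive} way — for instance, arrange that $a'(s)$ permutes the atoms of a coarser partition periodically (a periodic or ``finite orbit'' structure on the level-$m$ cylinders), so that the join $\bigvee_{i=0}^{k-1} a'(s)^{-i}\mathcal{P}$ stabilizes (stops refining) after finitely many steps. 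Once the join stabilizes at some bounded cardinality $C$, we get $N_{a'}(\mathcal{U}, k) \le C$ for all $k$, hence naive entropy along $s$ equal to zero, hence (by the dichotomy) zero naive entropy for $a'$. The clopen/finite nature of everything makes ``$N_{a'}(\mathcal{U},k) \le C$'' an open condition on $a'$: it only depends on finitely many values of the homeomorphism $a'(s)$ on a finite clopen partition, so it persists under uniformly small perturbations.

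Assembling this: fix a countable basis $\{\mathcal{U}_j\}$ of finite clopen covers of $2^{\mathbb{N}}$ (a countable cofinal family suffices since entropy is monotone in the cover). For each $j$ and each $\ell \in \mathbb{N}$, let $V_{j,\ell}$ be the set of actions $a$ for which there exists $k$ with $\frac{1}{k}\log N_a(\mathcal{U}_j, k) < 1/\ell$; by the perturbation lemma this set is open and dense. Set $\mathcal{G} = \bigcap_{j,\ell} V_{j,\ell}$, a dense $G_\delta$. For $a \in \mathcal{G}$, the naive entropy $h_{\mathrm{nv}}(\mathcal{U}_j, a)$ computed along the sequence of powers of $s$ is zero for every $j$, and taking the supremum over the cofinal family $\{\mathcal{U}_j\}$ shows the $s$-naive entropy is zero; the dichotomy then upgrades this to $\htop(\mathbb{F} \actson^a 2^{\mathbb{N}}) = 0$.

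The main obstacle I anticipate is making the perturbation genuinely small in the uniform metric while simultaneously forcing the join to stabilize: moving $a(s)$ to a homeomorphism with the desired ``eventually periodic on cylinders'' structure could a priori require a large perturbation if $a(s)$ is badly behaved at a fine scale. The resolution should be that one only needs to control $a'(s)$ up to the clopen partition $\mathcal{P}$ \emph{at a scale finer than $\varepsilon$} — one is free to let $a'(s)$ do anything within each small atom — so the perturbation is supported at scale $\le \varepsilon$ and the combinatorial surgery happens at the level of which atom maps to which atom, which can be arranged by an arbitrarily small move. Verifying carefully that a small uniform perturbation suffices to redirect the atom-to-atom map into a periodic pattern (using that homeomorphisms of the Cantor set are flexible and that any permutation of a clopen partition is realized by some homeomorphism close to a given one, provided the partition is fine enough relative to the modulus of continuity) is the technical heart of the argument.
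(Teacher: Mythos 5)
Your overall architecture is the same as the paper's (a $G_\delta$ condition cut out by the open sets ``$N\bigl(\bigvee_i a(s)^{-i}\mathscr{U}_j\bigr)\leq C$'', reduction to the single generator $s$, then Baire category), and the openness/$G_\delta$ part is fine -- it is exactly Lemmas \ref{lem20}--\ref{lem22}. The genuine gap is in your perturbation lemma, which is where all the work lies. The claim that an arbitrarily small uniform perturbation of $T=a(s)$ can be made to permute the atoms of a fine clopen partition (equivalently, to make the joins $\bigvee_{i=0}^{k-1}T'^{-i}\mathcal{P}$ stabilize: a stabilized join is itself a finite clopen partition permuted by $T'$) is false. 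Suppose $T$ admits a clopen set $B$ such that the $\epsilon$-neighborhood of $T(B)$ lies in $B$ and some $x\notin B$ has the $\epsilon$-ball around $T(x)$ inside $B$; any homeomorphism with a proper attractor does this, e.g.\ the ``translation'' on the two-point compactification of $K\times\mathbb{Z}$ ($K$ a Cantor set), and taking a disjoint union with a full shift gives such a $T$ with positive entropy, so it cannot be dismissed as already good. Then every $T'$ with $\sup_x d(T'x,Tx)\leq\epsilon$ maps $B$ into $B$ and also maps the atom of $x$ into $B$; if $T'$ permuted the atoms of a clopen partition with mesh smaller than $d(B,X\setminus B)$, the atoms inside $B$ would be permuted among themselves while the atom of $x$ (outside $B$) would also be sent into $B$, contradicting injectivity of the permutation. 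So for such $T$ and small $\epsilon$ no nearby homeomorphism permutes any fine clopen partition, Hall-type ``combinatorial surgery'' is impossible no matter how the partition is refined, and your density step does not go through. The statement you actually need -- that every homeomorphism of $2^{\mathbb{N}}$ is a uniform limit of homeomorphisms with zero entropy (or at least with subexponential cover growth for each fixed cover) -- is true but is a genuinely nontrivial theorem; it is Corollary 2.5 of \cite{GW01}, proved via the topological Rohlin property, and it is precisely what the paper invokes to get density. Your sketch does not supply a substitute proof.

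Two smaller points. The upgrade from ``zero entropy along $s$'' to ``zero naive entropy of the $\mathbb{F}$-action'' is not given by the $0$--$\infty$ dichotomy; it follows from the definition of $\htop$ (the infimum is over all finite subsets of $\mathbb{F}$, in particular over intervals in $\langle s\rangle$), i.e.\ from Proposition \ref{prop25}, and this works for every infinite-order element regardless of amenability. Relatedly, your worry about $\mathbb{F}=\mathbb{Z}$ is unfounded: the theorem is true there as well (comeager does not mean all, and for $\mathbb{Z}$ naive entropy agrees with classical entropy), so no case needs to be excluded.
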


Combining Theorems \ref{thm1} and \ref{thm2} we have the following corollary.

\begin{corollary} If $\mathbb{F}$ is a countable free group, then the set of $\mathbb{F}$-systems with sofic entropy at most $0$ is comeager in $\mathrm{A}_{\mathrm{top}}\left(\mathbb{F},2^\mathbb{N} \right)$.  \end{corollary}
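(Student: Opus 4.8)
The plan is to read the corollary off from Theorems~\ref{thm1} and~\ref{thm2}, using only the elementary fact that a superset of a comeager subset of a Baire space is again comeager. The one preliminary point to record is that a countable free group $\mathbb{F}$ is sofic: $\mathbb{F}$ is residually finite (it embeds into $\mathbb{F}_2$, which is residually finite), and every residually finite group is sofic. In particular $\mathbb{F}$ admits a sofic approximation $\Sigma$, so the hypotheses of Theorem~\ref{thm1} can actually be met.

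Fix any sofic approximation $\Sigma$ to $\mathbb{F}$. By Theorem~\ref{thm2} the set
$$C = \left\{\, a \in \mathrm{A}_{\mathrm{top}}\!\left(\mathbb{F}, 2^{\mathbb{N}}\right) \ : \ \htop\!\left(\mathbb{F} \actson^{a} 2^{\mathbb{N}}\right) = 0 \,\right\}$$
is comeager in the Polish space $\mathrm{A}_{\mathrm{top}}\!\left(\mathbb{F}, 2^{\mathbb{N}}\right)$. By Theorem~\ref{thm1}, for each $a \in C$ we have $h^{\mathrm{tp}}_{\Sigma}\!\left(\mathbb{F} \actson^{a} 2^{\mathbb{N}}\right) \le 0$, so $C$ is contained in
$$D = \left\{\, a \in \mathrm{A}_{\mathrm{top}}\!\left(\mathbb{F}, 2^{\mathbb{N}}\right) \ : \ h^{\mathrm{tp}}_{\Sigma}\!\left(\mathbb{F} \actson^{a} 2^{\mathbb{N}}\right) \le 0 \,\right\}.$$
Since $C$ contains a dense $G_{\delta}$ subset of $\mathrm{A}_{\mathrm{top}}\!\left(\mathbb{F}, 2^{\mathbb{N}}\right)$ and $C \subseteq D$, the set $D$ contains that same dense $G_{\delta}$, hence $D$ is comeager. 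As $\Sigma$ was arbitrary, the corollary holds for every sofic approximation to $\mathbb{F}$ simultaneously.

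There is essentially no obstacle here once Theorems~\ref{thm1} and~\ref{thm2} are in hand; the only points worth stating explicitly are the soficity of free groups (which is what makes sofic entropy meaningful and lets Theorem~\ref{thm1} apply) and the monotonicity of comeagerness under inclusion. It is also harmless that $D$ need not be Borel, since the notion of comeagerness makes no reference to measurability.
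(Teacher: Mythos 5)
Your proposal is correct and is exactly the argument the paper intends: the corollary is stated as an immediate combination of Theorems \ref{thm1} and \ref{thm2}, using the soficity of free groups (residually finite, hence sofic) and the fact that a superset of a comeager set is comeager. Nothing further is needed.
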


Another natural question to ask is whether there is a relation between naive measure entropy and naive topological entropy. In Section \ref{sec23} we show half of such a variational principle. Let $\hnv$ denote the naive measure entropy.

\begin{theorem} \label{thm3} If $\Gamma \actson X$ is a topological $\Gamma$-system and $\mu$ is an invariant measure for $\Gamma \actson X$ then \[ \hnv(\Gamma \actson (X,\mu)) \leq \htop(\Gamma \actson X). \]  \end{theorem}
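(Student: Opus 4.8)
The plan is to work directly from the definitions of naive entropy. Recall that naive entropy is computed over finite subsets $F \subseteq \Gamma$ (rather than over Følner sequences): for a measurable partition $\mathcal{P}$ one sets $\hnv(\Gamma \actson (X,\mu), \mathcal{P}) = \inf_F \frac{1}{|F|} H_\mu\!\left(\bigvee_{\gamma \in F} \gamma^{-1}\mathcal{P}\right)$ and takes the supremum over finite $\mathcal{P}$; similarly $\htop(\Gamma \actson X, \mathcal{U}) = \inf_F \frac{1}{|F|} \log N\!\left(\bigvee_{\gamma \in F} \gamma^{-1}\mathcal{U}\right)$ for a finite open cover $\mathcal{U}$, where $N(\cdot)$ is the minimal cardinality of a subcover, and one takes the supremum over $\mathcal{U}$. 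So it suffices to show: for every finite measurable partition $\mathcal{P}$ of $X$ and every $\varepsilon > 0$, there is a finite open cover $\mathcal{U}$ with $\hnv(\Gamma\actson(X,\mu),\mathcal{P}) \le \htop(\Gamma\actson X,\mathcal{U}) + \varepsilon$.

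First I would pass from an arbitrary finite partition to one whose atoms are ``almost open.'' Using regularity of the Borel measure $\mu$ on the compact metric space $X$, given $\mathcal{P} = \{P_1,\dots,P_k\}$ and $\delta>0$, I choose compact sets $K_i \subseteq P_i$ with $\mu(P_i \setminus K_i) < \delta$, and let $K_0 = X \setminus \bigcup_i K_i$, a set of small measure. The key point is the standard entropy estimate: if $\mathcal{Q}$ is the partition $\{K_0 \cup K_1, K_2, \dots, K_k\}$ (absorbing the ``error set'' $K_0$ into one atom), then $H_\mu(\mathcal{P} \mid \mathcal{Q})$ is small — controlled by $k$, $\delta$, and the Shannon entropy function — uniformly, and this propagates to joins: $H_\mu(\bigvee_{\gamma\in F}\gamma^{-1}\mathcal{P}) \le H_\mu(\bigvee_{\gamma\in F}\gamma^{-1}\mathcal{Q}) + |F|\cdot\eta(\delta)$ where $\eta(\delta)\to 0$ as $\delta\to 0$ (this is just subadditivity of conditional entropy over the join plus $G$-invariance of $\mu$). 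Dividing by $|F|$ and taking $\inf_F$, we get $\hnv(\Gamma\actson(X,\mu),\mathcal{P}) \le \hnv(\Gamma\actson(X,\mu),\mathcal{Q}) + \eta(\delta)$.

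Next I would build an open cover refining the combinatorial structure of $\mathcal{Q}$. Since $K_1, \dots, K_k$ are disjoint compact sets, I can find disjoint open sets $U_i \supseteq K_i$, and I set $\mathcal{U} = \{U_1, \dots, U_k, U_0\}$ where $U_0 = X$ (or more economically $U_0$ an open neighborhood of $K_0$) to guarantee $\mathcal{U}$ covers $X$. The essential observation is that for any finite $F$, the number of atoms of $\bigvee_{\gamma\in F}\gamma^{-1}\mathcal{Q}$ with positive measure is bounded by $N(\bigvee_{\gamma\in F}\gamma^{-1}\mathcal{U})$: each nonnull $\mathcal{Q}$-join atom $\bigcap_{\gamma\in F}\gamma^{-1}Q_{i_\gamma}$ meets the corresponding $\bigcap_{\gamma\in F}\gamma^{-1}U_{i_\gamma}$ in a set of positive measure (since $K_i \subseteq U_i$ and $\mu$ gives full mass to the $K_i$'s outside a null-ish set — here I must be slightly careful and instead argue that, up to the small-measure set, the partition-join refines the cover-join), so distinct nonnull join atoms of $\mathcal{Q}$ force distinct members of any subcover drawn from the cover-join. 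Hence $H_\mu(\bigvee_{\gamma\in F}\gamma^{-1}\mathcal{Q}) \le \log N(\bigvee_{\gamma\in F}\gamma^{-1}\mathcal{U})$, giving $\hnv(\Gamma\actson(X,\mu),\mathcal{Q}) \le \htop(\Gamma\actson X,\mathcal{U})$. Combining with the previous step and letting $\delta \to 0$, then taking suprema over $\mathcal{P}$ on the left and the resulting $\mathcal{U}$ on the right (bounded by the supremum defining $\htop$), finishes the proof.

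I expect the main obstacle to be the bookkeeping around the ``error atom'' $K_0$ in the step comparing $\mathcal{Q}$-join atoms to $\mathcal{U}$-join subcovers: because $K_0$ is lumped into the atom $K_0 \cup K_1$ but $U_0 = X$ is a separate cover element, one has to check carefully that the map from nonnull partition-join atoms to cover-join elements is genuinely injective (or injective enough that the $\log$ counts compare after the $\eta(\delta)$ correction already absorbed the discrepancy). One clean way around this is to instead take $\mathcal{Q}$ to literally be the cover $\mathcal{U}$ regarded as a partition via a measurable selection (assign each point to the lowest-indexed $U_i$ containing it), so that $\mathcal{Q}$ refines no more than $\mathcal{U}$ does; then $H_\mu$ of any join of $\mathcal{Q}$ is at most $\log$ of the number of nonempty atoms of the corresponding join of $\mathcal{U}$, which is at most $\log N(\bigvee\gamma^{-1}\mathcal{U})$ only after replacing ``nonempty'' by ``appears in a minimal subcover'' — this requires the elementary fact that a minimal subcover of a finite cover of a finite-atom structure still has at least as many members as there are atoms requiring distinct covering sets, which holds since each atom must lie in some subcover member. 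Getting this counting lemma stated cleanly is the one genuinely delicate point; everything else is routine manipulation of entropy and regularity.
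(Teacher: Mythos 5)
Your overall strategy (approximate the partition by compact sets, build an open cover from them, and compare counting) is the same as the paper's, but the key counting step as you state it is false, and this is not a bookkeeping issue that disappears as $\delta \to 0$. Distinct nonnull atoms of $\bigvee_{\gamma \in F}\gamma^{-1}\mathcal{Q}$ do \emph{not} force distinct members of a subcover of $\bigvee_{\gamma \in F}\gamma^{-1}\mathcal{U}$: a single subcover element can meet many partition-join atoms, since each $U_i$ necessarily meets at least two atoms of $\mathcal{Q}$ (it contains $K_i$ but also typically intersects the error set $K_0$, which you have lumped into another atom; your ``measurable selection'' variant has the same defect, as one cover-join element can contain several selection atoms). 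The correct statement is a multiplicity bound: with a choice like the paper's $U_i = B_i \cup B_{k+1}$, each cover-join element meets at most $2^{|F|}$ atoms of $\beta^F$, so one only gets
\[
H_\mu\Bigl(\bigvee_{\gamma \in F}\gamma^{-1}\mathcal{Q}\Bigr) \;\leq\; |F|\log 2 \;+\; \log N\Bigl(\bigvee_{\gamma \in F}\gamma^{-1}\mathcal{U}\Bigr),
\]
hence $\hnv \leq \htop + \log 2$ (plus the small conditional-entropy term), with an additive constant that is independent of $\delta$ and therefore cannot be killed by letting $\delta \to 0$ as you propose.

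This is exactly why the paper needs a second idea after the counting estimate: it proves $\hnv(\Gamma \actson (X,\mu)) \leq \htop(\Gamma \actson X) + 1 + \log 2$, then uses the fact that both naive entropies are additive under Cartesian products, applies the inequality to $\Gamma \actson (X^n,\mu^n)$ to get $n\cdot \hnv \leq n\cdot \htop + 1 + \log 2$, and divides by $n$ (the analogue, for general $\Gamma$, of the classical trick of replacing $T$ by $T^n$ in the $\mathbb{Z}$-case). Your proposal is missing both the correct form of the counting lemma and this constant-removal mechanism; supplying the multiplicity-$2^{|F|}$ bound and the Cartesian-power argument would turn it into essentially the paper's proof.
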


\subsection{Notational preliminaries.}

Throughout the paper $\Gamma$ will denote a countable discrete group. A measurable $\Gamma$-system $\Gamma \actson^a (X,\mu)$ consists of a standard probability space $(X,\mu)$ and measure-preserving action on $\Gamma$ on $(X,\mu)$, equivalently a homomorphism $a: \Gamma \to \mathrm{Aut}(X,\mu)$ where $\mathrm{Aut}(X,\mu)$ is the group of measure-preserving bijections from $(X,\mu)$ to itself. We use Kechris's convention from \cite{K} and write $\gamma^a$ instead of $a(\gamma)$ for $\gamma \in \Gamma$. We identify two measure-preserving bijections if they agree almost everywhere, and thus identify two $\Gamma$-systems $\Gamma \actson^a (X,\mu)$ and $\Gamma \actson^b (X,\mu)$ if $\gamma^a = \gamma^b$ almost everywhere for each $\gamma \in \Gamma$. \\
\\
A topological $\Gamma$-system $\Gamma \actson^a X$ consists of a compact metrizable space $X$ and a homomorphism $a:\Gamma \to \mathrm{Homeo}(X)$, where $\mathrm{Homeo}(X)$ is the group of homeomorphisms of $X$. As in the measurable case, we write $\gamma^a$ instead of $a(\gamma)$. If $\Gamma = \mathbb{Z}$ we use the standard notation and write $a(1) = T$, denoting the system by $(X,T)$ or $(X,\mu,T)$.\\
\\
For $n \in \mathbb{N}$, we let $[n]$ denote the set $\{1,\ldots,n\}$.

\subsection{Acknowledgments.}

We thank Alexander Kechris for introducing us to this topic, and Lewis Bowen for allowing us to read his preprint \cite{Bow14}. We also thank the anonymous referee for numerous helpful comments.

\subsection{Additional note.}

After communicating our results to Brandon Seward, he informed us that the measurable case of Bowen's naive entropy conjecture has been proved independently by a number of researchers including Miklos Abert, Tim Austin, Seward himself and Benjamin Weiss. This together with our Theorem \ref{thm3}, the variational principle for sofic entropy and the fact that a topological system with no invariant measure has sofic entropy $-\infty$ give an alternate, indirect proof of our Theorem \ref{thm1}. Our work was done independently of the (as yet unpublished) work of these authors on the measurable case.

\section{Naive entropy.} 

\subsection{Naive measure entropy.}

In this section we introduce the naive measure entropy of a dynamical system. Fix a measurable $\Gamma$-system $\Gamma \actson^a (X,\mu)$. All partitions considered will be assumed to be measurable. If $\alpha = (A_1,\ldots,A_n)$ is a finite partition of $(X,\mu)$ the Shannon entropy $H_\mu(\alpha)$ of $\alpha$ is defined by \[ H_\mu(\alpha) = - \sum_{i=1}^n \mu(A_i) \log(\mu(A_i)).\] If $\alpha$ and $\beta$ are partitions of $(X,\mu)$, the join $\alpha \vee \beta$ is the partition consisting of all intersections $A \cap B$ where $A \in \alpha$ and $B \in \beta$. We make a similar definition for the join $\bigvee_{i=1}^n \alpha_i$ of a finite family $(\alpha_i)_{i=1}^n$ of partitions. If $\alpha$ is partition and $\gamma \in \Gamma$ we let $\gamma^a \alpha$ be the partition $\{\gamma^a A: A \in \alpha \}$. For a finite set $F \subseteq \Gamma$ let $\alpha^F$ denote the partition $\bigvee_{\gamma \in F} \gamma^a \alpha$. If $(X,\mu,T)$ is a $\mathbb{Z}$-system and $F = [0,n]$ we write $\alpha_0^n$ for $\alpha^F$. Recall the classical definition of entropy for $\mathbb{Z}$-systems.

\begin{definition} \label{def1.1} Let $(X,\mu,T)$ be a measurable $\mathbb{Z}$-system. The dynamical entropy $h_\mu(\alpha)$ of a finite partition $\alpha$ is defined by \[ h_\mu(\alpha) = \inf_{n \in \mathbb{N}} \frac{1}{n} H_\mu\left(\alpha^n_0\right).\] The \textbf{measure entropy} $h(X,\mu,T)$ of the system is defined by \[ h(X,\mu,T) = \sup \{ h_\mu(\alpha): \alpha \mbox{ is a finite partition of } X. \} \] \end{definition}

See Chapter 14 of \cite{Glas03} for more information on the entropy of $\mathbb{Z}$-systems. In \cite{Bow14}, L. Bowen has introduced the following analog of Definition \ref{def10}.

\begin{definition} \label{def10} Let $\Gamma \actson (X,\mu)$ be a measurable $\Gamma$-system. The dynamical entropy $h_\mu(\alpha)$ of a finite partition $\alpha$ is defined by \[ h_\mu(\alpha) = \inf_F \frac{1}{|F|} H_\mu\left(\alpha^F \right),\] where the infimum is over all nonempty finite subsets $F$ of $\Gamma$. The \textbf{naive measure entropy} $\hnv\left(\Gamma \actson (X,\mu) \right)$ of the system is defined by \[ \hnv \left(\Gamma \actson (X,\mu)\right) = \sup \{ h_\mu(\alpha): \alpha \mbox{ is a finite partition of }X \}.\] \end{definition}

In the case of $\mathbb{Z}$, Theorem $4.2$ in \cite{DFR15} asserts that Definition \ref{def10} agrees with Definition \ref{def1.1}. The next fact was proven by Bowen in \cite{Bow14}.

\begin{theorem}\label{prop47} If $\Gamma$ is nonamenable then for any measurable $\Gamma$-system $\Gamma \actson (X,\mu)$ we have $\hnv(\Gamma \actson (X,\mu)) \in \{0,\infty\}$. \end{theorem}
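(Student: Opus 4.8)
The plan is to establish the dichotomy by showing that if $\hnv(\Gamma \actson (X,\mu)) \neq 0$ then in fact $\hnv(\Gamma \actson (X,\mu)) = \infty$, and to do this by producing finite partitions $\alpha_k$ whose dynamical entropies $h_\mu(\alpha_k)$ grow geometrically. The combinatorial input is the standard fact that, since $\Gamma$ is nonamenable, the right Følner condition fails: there exist a finite set $S \subseteq \Gamma$ containing the identity and a constant $\epsilon_0 > 0$ such that $|FS| \geq (1+\epsilon_0)|F|$ for every finite nonempty $F \subseteq \Gamma$.

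First I would record the elementary identity $(\alpha^S)^F = \alpha^{FS}$, valid for any finite measurable partition $\alpha$ and finite sets $F, S \subseteq \Gamma$. This holds because each $\gamma^a$ is a bijection of $(X,\mu)$ and hence commutes with joins, while $a$ being a homomorphism gives $\gamma^a s^a \alpha = (\gamma s)^a \alpha$, so that $\bigvee_{\gamma \in F} \bigvee_{s \in S} (\gamma s)^a \alpha$ collapses to $\bigvee_{g \in FS} g^a \alpha$ by idempotency of the join. Next comes the iteration. Suppose $\hnv(\Gamma \actson (X,\mu)) \neq 0$; since $\hnv$ is a supremum of the nonnegative quantities $h_\mu(\alpha)$, there is a finite partition $\alpha_0$ with $c_0 := h_\mu(\alpha_0) > 0$. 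Put $\alpha_{k+1} := \alpha_k^S$ and $c_k := h_\mu(\alpha_k)$. Because $c_k = \inf_G \frac{1}{|G|} H_\mu(\alpha_k^G)$ is an infimum over finite nonempty sets, we have $H_\mu(\alpha_k^G) \geq c_k|G|$ for every such $G$; taking $G = FS$ and invoking the identity together with the expansion inequality gives
\[ H_\mu(\alpha_{k+1}^F) = H_\mu(\alpha_k^{FS}) \geq c_k|FS| \geq (1+\epsilon_0)c_k|F| \]
for every finite nonempty $F$, hence $c_{k+1} \geq (1+\epsilon_0)c_k$. Iterating yields $c_k \geq (1+\epsilon_0)^k c_0 \to \infty$, and since each $\alpha_k$ is a finite partition we conclude $\hnv(\Gamma \actson (X,\mu)) \geq \sup_k c_k = \infty$, which completes the dichotomy.

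I do not anticipate a genuine obstacle: the argument is short and uses only the defining infimum of $h_\mu$. The points requiring care are stating nonamenability in the right quantitative form — the multiplicative expansion $|FS| \geq (1+\epsilon_0)|F|$ uniformly in $F$, which is exactly the negation of the right Følner condition, and where insisting $1 \in S$ is what converts a boundary estimate into this clean inequality — and keeping the side of multiplication consistent with the identity $(\alpha^S)^F = \alpha^{FS}$. The same scheme should apply, with Shannon entropy replaced by the appropriate topological counting quantity, to the topological naive entropy discussed later.
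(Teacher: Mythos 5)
Your proof is correct and takes essentially the same route as the paper: both amplify a partition $\alpha$ with $c = h_\mu(\alpha) > 0$ by thickening it to $\alpha^S$ and using $H_\mu\left(\alpha^{FS}\right) \geq c\,|FS|$ together with the set-product expansion furnished by the failure of the F\o lner condition. The only difference is bookkeeping: the paper picks, for each target $r$, one set $W$ with $\inf_F |WF|/|F| \geq r/c$, while you iterate a fixed expansion set $S$ with ratio $1+\epsilon_0$ — equivalent, since such a $W$ can be obtained as a power of $S$ (and your version keeps the left/right convention $(\alpha^S)^F = \alpha^{FS}$ slightly more carefully).
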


\begin{proof} Suppose there is a finite partition $\alpha$ with $h_\mu(\alpha) = c > 0$. Choose $r \in \mathbb{R}$. Since $\Gamma$ is nonamenable, there is a finite set $W \subseteq \Gamma$ such that \[ \inf_F \frac{|WF|}{|F|} \geq \frac{r}{c}, \] where the infimum is over all nonempty finite subsets of $\Gamma$. Then we have \begin{align*} h_\mu\left(\alpha^W \right) &= \inf_F \frac{1}{|F|} H_\mu\left(\alpha^{WF} \right) \\ & = \inf_F \frac{|WF|}{|F|} \left( \frac{1}{|WF|} H_\mu \left(\alpha^{WF} \right)\right) \\& \geq \inf_F \frac{|WF|}{|F|} h_\mu(\alpha) \\ & \geq r. \end{align*} \end{proof}

\subsection{Naive topological entropy.}

In this section we introduce the naive topological entropy of a dynamical system. Fix a topological $\Gamma$-system $\Gamma \actson^a X$. If $\mathscr{U}$ is an open cover of a compact metric space $X$, let $N(\mathscr{U})$ denote the minimal cardinality of a subcover of $\mathscr{U}$. If $\mathscr{U}$ and $\mathscr{V}$ are open covers of $X$, the join $\mathscr{U} \vee \mathscr{V}$ is the open cover consisting of all intersections $U \cap V$ where $U \in \mathscr{U}$ and $V \in \mathscr{V}$. We make a similar definition for the join $\bigvee_{i=1}^n \mathscr{U}_i$ of a finite family $(\mathscr{U}_i)_{i=1}^n$ of open covers. If $\mathscr{U}$ is an open cover and $\gamma \in \Gamma$ we let $\gamma^a \mathscr{U}$ be the open cover $\{\gamma^a U: U \in \mathscr{U} \}$. For a finite set $F \subseteq \Gamma$, write $\mathscr{U}^F$ to refer to $\bigvee_{\gamma \in F} \gamma^a \mathscr{U}$. If $(X,T)$ is a $\mathbb{Z}$-system and $F = [0,n]$ we write $\mathscr{U}_0^n$ for $\mathscr{U}^F$. Again we recall the definition of entropy for $\mathbb{Z}$-systems.

\begin{definition} Let $(X,T)$ be a topological $\mathbb{Z}$-system. The entropy $h^{\mathrm{tp}}(\mathscr{U})$ of a finite open cover $\mathscr{U}$ is defined by \[ h^{\mathrm{tp}}(\mathscr{U}) = \inf_{n \in \mathbb{N}} \frac{1}{n} \log\left(N\left(\mathscr{U}_0^n \right)\right), \] and the \textbf{topological entropy} $h^{\mathrm{tp}}(X,T)$ of the system is defined by \[ h^{\mathrm{tp}}(\mathbb{Z} \actson X) = \sup \{h^{\mathrm{tp}}(\mathscr{U}): \mathscr{U} \mbox{ is a finite open cover of }  X \}.\] \end{definition} 

Following Definition \ref{def10} we make the following definition.

\begin{definition} Let $\Gamma \actson X$ be a topological $\Gamma$-system. Given a finite open cover $\mathscr{U}$ of $X$ we define the entropy $\htop(\mathscr{U})$ of $\mathscr{U}$ by \[ \htop(\mathscr{U}) =  \inf_F \frac{1}{|F|} \log \left( N \left(\mathscr{U}^F \right) \right), \] where the infimum is over all nonempty finite subsets of $\Gamma$. We define the \textbf{naive topological entropy} $\htop(\Gamma \actson X)$ of $\Gamma \actson X$ by    \[ \htop(\Gamma  \actson X) = \sup \{\htop(\mathscr{U}): \mathscr{U} \mbox{ is a finite open cover of }  X \}. \]  \end{definition}

A similar concept has been studied in \cite{Bis04}, \cite{BisUr06} and \cite{GLW88} and is discussed the text \cite{Down11}. If $\Gamma$ has a finite generating set $S$, these authors define the entropy of an open cover $\mathscr{U}$ by the formula \[ \limsup_{n \to \infty} \frac{1}{n} \log \left( N \left(\mathscr{U}^{S^n} \right) \right)\] and the entropy of the system by taking the supremum over finite open covers. Clearly a system with zero entropy in this sense has $\htop$ equal to zero. Hence we work with $\htop$ in order to get the strongest form of Theorem \ref{thm1}. An identical argument to the proof of Theorem \ref{prop47} shows that if $\Gamma$ is nonamenable then any topological $\Gamma$-system has naive topological entropy either $0$ or $\infty$.\\
\\
We record the following observation, which is immediate from the definition.

\begin{proposition} \label{prop25} If $\htop\left(\Gamma \actson^a X \right) > 0$ then for every $\gamma \in \Gamma$ with infinite order we have $h^{\mathrm{tp}}(X,\gamma^a) > 0$, where we regard $(X,\gamma^a)$ as a $\mathbb{Z}$-system. \end{proposition}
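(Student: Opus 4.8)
The plan is to show that a single open cover witnessing positive naive $\Gamma$-entropy already witnesses positive $\mathbb{Z}$-entropy for the transformation $\gamma^a$, with no need to pass through sofic or variational machinery. First I would unpack the hypothesis $\htop(\Gamma \actson^a X) > 0$: since this quantity is a supremum over finite open covers, there is a finite open cover $\mathscr{U}$ of $X$ with $\htop(\mathscr{U}) = c$ for some $c > 0$. By the definition of $\htop(\mathscr{U})$ as an infimum over all nonempty finite $F \subseteq \Gamma$, this says precisely that $\frac{1}{|F|} \log N(\mathscr{U}^F) \geq c$ for every nonempty finite $F \subseteq \Gamma$.

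The key step is then to specialize this inequality to the natural exhausting sequence inside the cyclic subgroup generated by $\gamma$. Because $\gamma$ has infinite order, for each $n \in \mathbb{N}$ the set $F_n := \{e, \gamma, \gamma^2, \ldots, \gamma^n\}$ has exactly $n+1$ elements, and comparing the definitions of the joins involved gives the bookkeeping identity $\mathscr{U}^{F_n} = \mathscr{U}_0^n$, where the right-hand side is computed in the $\mathbb{Z}$-system $(X,\gamma^a)$. Applying the inequality above with $F = F_n$ yields $\log N(\mathscr{U}_0^n) \geq (n+1)c \geq nc$, where the second inequality uses only $c \geq 0$. Hence $\frac{1}{n} \log N(\mathscr{U}_0^n) \geq c$ for every $n$.

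Taking the infimum over $n$ shows $h^{\mathrm{tp}}(\mathscr{U}) \geq c$ for $\mathscr{U}$ regarded as a finite open cover of the $\mathbb{Z}$-system $(X,\gamma^a)$, and since $h^{\mathrm{tp}}(X,\gamma^a)$ is the supremum of $h^{\mathrm{tp}}(\mathscr{V})$ over all such covers, we get $h^{\mathrm{tp}}(X,\gamma^a) \geq c > 0$, as desired. There is essentially no serious obstacle here; the one point that must be handled carefully is that the infinite-order hypothesis on $\gamma$ is genuinely used — it is exactly what guarantees $|F_n| = n+1$ rather than a bounded quantity, and for a torsion element the sequence $(F_n)$ would stabilize and the argument would collapse.
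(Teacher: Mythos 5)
Your argument is correct and is exactly the reasoning the paper has in mind: the paper states Proposition \ref{prop25} as ``immediate from the definition,'' and your restriction of the infimum over finite $F \subseteq \Gamma$ to the sets $F_n = \{e,\gamma,\dots,\gamma^n\}$ (using infinite order to get $|F_n| = n+1$, and $\mathscr{U}^{F_n} = \mathscr{U}_0^n$ for the $\mathbb{Z}$-system $(X,\gamma^a)$) is precisely that immediate argument. Nothing further is needed.
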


\subsection{Equivalent definitions of naive topological entropy.}

We now introduce two standard reformulations of the definition of naive topological entropy, due originally in the case of $\mathbb{Z}$ to R. Bowen. For a metric space $(X,d)$ and $\epsilon > 0$ say a set $S \subseteq X$ is $\epsilon$-separated if for each distinct pair $x_1,x_2 \in S$ we have $d(x_1,x_2) \geq \epsilon$. Say that $S$ is $\epsilon$-spanning if for every $x \in X$ there is $x_0 \in S$ with $d(x,x_0) \leq \epsilon$. Define $\sep(X,\epsilon,d)$ to be the maximal cardinality of an $\epsilon$-separated subset of $X$, and $\spn(X,\epsilon,d)$ to be the minimal cardinality of an $\epsilon$-spanning subset of $X$. It is clear that \begin{equation}\label{eq40}\spn(X,\epsilon,d) \leq \sep(X,\epsilon,d) \leq \spn\left(X,\frac{\epsilon}{2},d \right).\end{equation}

Now, fix a $\Gamma$-system $\Gamma \actson^a X$ and a compatible metric $d$ on $X$. For a nonempty finite subset $F \subseteq \Gamma$ define a metric $d_F$ on $X$ by letting $d_F(x_1,x_2) = \max_{\gamma \in F} d\left( \gamma^a x_1,\gamma^a x_2 \right)$. The proof of the following is an immediate generalization of the corresponding statement for $\mathbb{Z}$-systems, which can be found as Proposition $14.11$ in \cite{Glas03}.

\begin{proposition}\label{prop5} Letting $F$ range over the nonempty finite subsets of $\Gamma$ we have \[\htop\left(\Gamma \actson^a X \right) = \sup_{\epsilon > 0} \inf_F \frac{1}{|F|} \log( \sep(X,\epsilon,d_F)) = \sup_{\epsilon > 0} \inf_F \frac{1}{|F|} \log( \spn(X,\epsilon,d_F)).\] \end{proposition}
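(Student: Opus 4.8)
The plan is to reproduce, with $[0,n]$ replaced by an arbitrary nonempty finite $F\subseteq\Gamma$ and $1/n$ by $1/|F|$, the classical sandwiching argument for $\mathbb{Z}$-systems that connects open covers to separated and spanning sets (Proposition $14.11$ of \cite{Glas03}). First I would dispose of the equality of the $\sep$ and $\spn$ expressions: applying \eqref{eq40} to the metric $d_F$ gives
\[ \spn(X,\epsilon,d_F)\le\sep(X,\epsilon,d_F)\le\spn(X,\tfrac{\epsilon}{2},d_F), \]
and after applying $\frac{1}{|F|}\log(\cdot)$, then $\inf_F$, then $\sup_{\epsilon>0}$, the factor of $2$ is absorbed by the supremum over $\epsilon$; hence the two rightmost quantities in the statement coincide. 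It remains to sandwich $\htop(\Gamma\actson X)$ between them.

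For the lower bound on $\htop$, fix $\epsilon>0$ and use compactness of $X$ to choose a finite open cover $\mathscr{U}_\epsilon$ all of whose members have $d$-diameter strictly less than $\epsilon$. Every member of $\mathscr{U}_\epsilon^F$ has the form $W=\bigcap_{\gamma\in F}\gamma^a U_\gamma$ with $U_\gamma\in\mathscr{U}_\epsilon$, so $\gamma^{-a}W\subseteq U_\gamma$ for each $\gamma\in F$ and therefore $W$ has $d_{F^{-1}}$-diameter less than $\epsilon$. Thus any $(\epsilon,d_{F^{-1}})$-separated set meets each member of a minimal subcover of $\mathscr{U}_\epsilon^F$ at most once, giving $\sep(X,\epsilon,d_{F^{-1}})\le N(\mathscr{U}_\epsilon^F)$. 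Applying $\frac{1}{|F|}\log(\cdot)$ and $\inf_F$, and using that $F\mapsto F^{-1}$ is a cardinality-preserving bijection of the nonempty finite subsets of $\Gamma$, we obtain $\inf_F\frac{1}{|F|}\log\sep(X,\epsilon,d_F)\le\htop(\mathscr{U}_\epsilon)\le\htop(\Gamma\actson X)$, and $\sup_{\epsilon>0}$ completes this direction.

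For the upper bound, fix a finite open cover $\mathscr{U}$ and, again by compactness, choose a Lebesgue number $\delta>0$ for $\mathscr{U}$, so that every subset of $X$ of $d$-diameter at most $\delta$ lies in a single member of $\mathscr{U}$. Given a nonempty finite $F\subseteq\Gamma$, let $S$ be a minimal $(\delta/2,d_{F^{-1}})$-spanning set; the closed $d_{F^{-1}}$-balls of radius $\delta/2$ about the points of $S$ cover $X$, and if $B$ is such a ball then $\gamma^{-a}B$ has $d$-diameter at most $\delta$ for every $\gamma\in F$, so $\gamma^{-a}B\subseteq U_\gamma$ for some $U_\gamma\in\mathscr{U}$ and hence $B\subseteq\bigcap_{\gamma\in F}\gamma^a U_\gamma\in\mathscr{U}^F$. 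Thus $\mathscr{U}^F$ has a subcover of cardinality at most $|S|$, i.e. $N(\mathscr{U}^F)\le\spn(X,\delta/2,d_{F^{-1}})$. Applying $\frac{1}{|F|}\log(\cdot)$, then $\inf_F$ (reindexing by $F^{-1}$), then $\sup$ over finite open covers $\mathscr{U}$ yields $\htop(\Gamma\actson X)\le\sup_{\epsilon>0}\inf_F\frac{1}{|F|}\log\spn(X,\epsilon,d_F)$. Combining the three estimates with the already-established equality of the $\sep$ and $\spn$ expressions proves the proposition.

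The only point requiring care — and the reason the generalization is \emph{immediate} rather than verbatim — is the bookkeeping with inverses: the cover $\mathscr{U}^F$ is assembled from the maps $\gamma^a$ with $\gamma\in F$ applied positively, so the naturally matching Bowen metric is $d_{F^{-1}}$ rather than $d_F$. This causes no loss because each expression in the statement is an infimum over all nonempty finite $F$ and $F\mapsto F^{-1}$ is a bijection of that index set preserving $|F|$; I would record this reindexing once and use it silently. I do not anticipate any genuine obstacle beyond this routine care.
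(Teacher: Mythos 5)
Your proposal is correct and follows essentially the same route as the paper: the Lebesgue-number/spanning-set estimate $N(\mathscr{U}^F)\le\spn(X,\cdot,d_{F^{-1}})$ in one direction, the small-diameter-cover/separated-set estimate in the other, combined with the inequality $\spn\le\sep\le\spn(\cdot,\epsilon/2,\cdot)$. The only differences are cosmetic: you make explicit the $F\mapsto F^{-1}$ reindexing that the paper leaves implicit, and you absorb the $\epsilon/2$ into the supremum over $\epsilon$ rather than running the paper's concluding case analysis for finite versus infinite $\htop$.
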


\begin{proof} Fix $\epsilon > 0$ and $F \subseteq \Gamma$ finite. Write $F^{-1}$ for $\{\gamma^{-1}:\gamma \in F\}$. Let $\mathscr{U}$ be an open cover of $X$ with Lebesgue number $\epsilon$. Let $S \subseteq X$ be an $\epsilon$-spanning set of minimal cardinality with respect to $d_{F^{-1}}$. For every $x \in X$ there is $s \in S$ with $d\left(\gamma^a x,\gamma^a s \right) \leq \epsilon$ for all $\gamma \in F^{-1}$. Write $B_\epsilon(s)$ for the ball of radius $\epsilon$ around $s$ with respect to $d$. We have $\gamma^a x \in B_\epsilon(\gamma^a s)$ or equivalently $x \in \left(\gamma^{-1} \right)^a B_\epsilon(\gamma^a s)$ for all $\gamma \in F^{-1}$. Therefore $x \in \bigcap_{\gamma \in F^{-1}} \left(\gamma^{-1} \right)^a B_\epsilon(\gamma^a s)$ and so $\bigcup_{s \in S}\bigcap_{\gamma \in F^{-1}} \left(\gamma^{-1} \right)^a B_\epsilon(\gamma^a s)$ is an open cover of $X$. Now, for every $s \in S$ and $\gamma \in F^{-1}$ we have that $B_\epsilon(\gamma^a s)$ is contained in some element of $\mathscr{U}$ and hence $\bigcap_{\gamma \in F^{-1}} \left(\gamma^{-1} \right)^a B_\epsilon(\gamma^a s)$ is contained in an element of $\mathscr{U}^F$. It follows that \begin{equation} \label{eq42} N\left(\mathscr{U}^F \right) \leq |S| = \spn \left(X,\epsilon,d_{F^{-1}} \right). \end{equation}

If $\mathscr{V}$ is an open cover of $X$, let $\mathrm{diam}(\mathscr{V})$ denote the supremum of the diameters of elements of $\mathscr{V}$. Let $\mathscr{V}$ be an open cover of $X$ with $ \mathrm{diam}(\mathscr{V}) \leq \epsilon$. Let $R$ be an $\epsilon$-separated set of maximal cardinality with respect to $d_F$. An element of $\mathscr{V}^F$ contains at most one point of $R$, and hence \begin{equation} \label{eq41} \sep\left(X,\epsilon,d_F \right) \leq N\left(\mathscr{V}^F \right). \end{equation}

By $(\ref{eq40}), (\ref{eq42})$ and $(\ref{eq41})$ if $\mathscr{U}$ has Lebesgue number $\epsilon$ and $\mathrm{diam}(\mathscr{V}) \leq \epsilon$ we have for all finite $F \subseteq \Gamma$: \begin{align} \htop(\mathscr{U}) &= \inf_F \frac{1}{|F|} \log \left(N \left(\mathscr{U}^F \right) \right) \nonumber \\ & \leq \inf_F \frac{1}{|F|} \log \left(\spn \left(X,\epsilon,d_F \right) \right) \nonumber \\ & \leq \inf_F \frac{1}{|F|} \log \left(\sep\left(X,\epsilon,d_F \right)\right) \nonumber \\& \leq \inf_F \frac{1}{|F|} \log \left(N\left(\mathscr{V}^F \right) \right) \nonumber \\ & = \htop(\mathscr{V}) \nonumber \\ & \label{eq43} \leq \htop\left(\Gamma \actson^a X \right). \end{align}

Assume $\htop\left(\Gamma \actson^a X \right) < \infty$. Given $\kappa > 0$ find an open cover $\mathscr{U}$ so that $\htop \left(\Gamma \actson^a X \right) - \kappa \leq \htop( \mathscr{U})$. Then if $\epsilon$ is less than the Lebesgue number of $\mathscr{U}$, $(\ref{eq43})$ implies that \begin{align*} \htop \left(\Gamma \actson^a X \right) - \kappa &\leq \inf_F \frac{1}{|F|} \log \left(\spn \left(X,\epsilon,d_F \right) \right) \\ & \leq \inf_F \frac{1}{|F|} \log \left(\sep\left(X,\epsilon,d_F \right)\right) \\ & \leq \htop\left(\Gamma \actson^a X \right).\end{align*}

Assume $\htop\left(\Gamma \actson^a X \right) = \infty$. Given $r \in \mathbb{R}$ find an open cover $\mathscr{U}$ so that $r \leq \htop( \mathscr{U})$. Then if $\epsilon$ is less than the Lebesgue number of $\mathscr{U}$, we have again by $(\ref{eq43})$ that \begin{align*} r \leq \inf_F \frac{1}{|F|} \log \left(\spn \left(X,\epsilon,d_F \right) \right) \leq \inf_F \frac{1}{|F|} \log \left(\sep\left(X,\epsilon,d_F \right)\right).\end{align*} \end{proof}

In particular we see from Proposition \ref{prop5} that the quantities \[\sup_{\epsilon > 0} \inf_F \frac{1}{|F|} \log( \sep(X,\epsilon,d_F))\] and \[\sup_{\epsilon > 0} \inf_F \frac{1}{|F|} \log( \spn(X,\epsilon,d_F)) \] are independent of the choice of compatible metric $d$.

\subsection{Proof of Theorem \ref{thm3}.} \label{sec23}

Recall that if $\alpha = (A_1,\ldots,A_k)$ and $\beta = (B_1,\ldots,B_m)$ are finite partitions of $(X,\mu)$, the conditional Shannon entropy $H(\alpha | \beta)$ of $\alpha$ given $\beta$ is defined by \[ H(\alpha|\beta) = - \sum_{i=1}^k \sum_{j=1}^m \mu(A_i \cap B_j) \log \left( \frac{ \mu(A_i \cap B_j)}{\mu(B_j)} \right). \]

We will use the following well-known facts about Shannon entropy, which appear in \cite{Glas03} as Propositions $14.16, 14.18.2$ and $14.18.4$ respectively.

\begin{proposition} \label{prop18} \begin{description} \item[(1)]  $H(\alpha_1 \vee \alpha_2) = H(\alpha_1) + H(\alpha_2|\alpha_1)$, in particular $H(\alpha_1 \vee \alpha_2) \geq H(\alpha_1)$,

\item[(2)] If $\beta_2$ refines $\beta_1$ then $H(\alpha|\beta_2) \leq H(\alpha|\beta_1)$,

\item[(3)] $H(\alpha_1 \vee \alpha_2 |\beta) \leq H(\alpha_1|\beta) + H(\alpha_2|\beta)$. \end{description} \end{proposition}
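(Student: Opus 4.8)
The plan is to prove the three facts in the stated order, deriving (3) from (1) and (2). Throughout fix the measure $\mu$, write $H$ for entropy with respect to $\mu$, and for a cell $B$ with $\mu(B)>0$ write $\mu_B(\cdot)=\mu(\,\cdot\cap B)/\mu(B)$ for the conditional measure and $H_{\mu_B}(\alpha)$ for the Shannon entropy of $\alpha$ computed with $\mu_B$; we use the convention $0\log 0=0$, so cells of measure zero may be ignored. For (1) I would expand the left side directly. With $\alpha_1=(A_1,\dots,A_k)$ and $\alpha_2=(C_1,\dots,C_l)$ the cells of the join are the $A_i\cap C_j$, so $H(\alpha_1\vee\alpha_2)=-\sum_{i,j}\mu(A_i\cap C_j)\log\mu(A_i\cap C_j)$. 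The key step is to split the logarithm as $\log\mu(A_i\cap C_j)=\log\mu(A_i)+\log\bigl(\mu(A_i\cap C_j)/\mu(A_i)\bigr)$ on cells with $\mu(A_i)>0$. Summing the first piece over $j$ and using $\sum_j\mu(A_i\cap C_j)=\mu(A_i)$ recovers $H(\alpha_1)$, while the second piece is by definition $H(\alpha_2\mid\alpha_1)$; this gives the identity. The ``in particular'' clause follows because each summand of $H(\alpha_2\mid\alpha_1)$ has the form $-t\log t\ge 0$ for $t\in[0,1]$, so $H(\alpha_2\mid\alpha_1)\ge 0$.

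For (2) the central tool is the averaging formula
\[
H(\alpha\mid\beta)=\sum_{j}\mu(B_j)\,H_{\mu_{B_j}}(\alpha),
\]
obtained by regrouping the double sum in the definition; it displays conditional entropy as a weighted average over the cells of $\beta$ of the entropies of the induced conditional distributions. Assume $\beta_2$ refines $\beta_1$ and fix a cell $B$ of $\beta_1$, which is a disjoint union of cells $D_1,\dots,D_p$ of $\beta_2$. For each $i$ one has $\mu_B(A_i)=\sum_{r}\frac{\mu(D_r)}{\mu(B)}\,\mu_{D_r}(A_i)$, a convex combination whose weights sum to one. Since the map $p\mapsto-\sum_i p_i\log p_i$ is concave, being a sum of copies of the concave function $\phi(t)=-t\log t$, Jensen's inequality gives $H_{\mu_B}(\alpha)\ge\sum_r\frac{\mu(D_r)}{\mu(B)}H_{\mu_{D_r}}(\alpha)$. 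Multiplying by $\mu(B)$ and summing over the cells of $\beta_1$, the averaging formula collapses the right side to $H(\alpha\mid\beta_2)$, yielding $H(\alpha\mid\beta_1)\ge H(\alpha\mid\beta_2)$. This concavity argument is the one genuinely analytic step and is where I expect the only real subtlety to lie; everything else is bookkeeping.

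For (3) I would first upgrade (1) to a conditional chain rule $H(\alpha_1\vee\alpha_2\mid\beta)=H(\alpha_1\mid\beta)+H(\alpha_2\mid\alpha_1\vee\beta)$. This requires no new computation: applying (1) to the pairs $(\beta,\alpha_1\vee\alpha_2)$, $(\alpha_1\vee\beta,\alpha_2)$, and $(\beta,\alpha_1)$ gives the two expressions $H(\beta)+H(\alpha_1\vee\alpha_2\mid\beta)$ and $H(\beta)+H(\alpha_1\mid\beta)+H(\alpha_2\mid\alpha_1\vee\beta)$ for the single quantity $H(\alpha_1\vee\alpha_2\vee\beta)$, and cancelling $H(\beta)$ yields the claim. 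Finally, since $\alpha_1\vee\beta$ refines $\beta$, part (2) gives $H(\alpha_2\mid\alpha_1\vee\beta)\le H(\alpha_2\mid\beta)$; substituting this into the chain rule produces $H(\alpha_1\vee\alpha_2\mid\beta)\le H(\alpha_1\mid\beta)+H(\alpha_2\mid\beta)$, completing the proof.
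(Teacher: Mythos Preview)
Your proof is correct and is the standard textbook argument; the paper does not actually prove this proposition but merely cites it as Propositions 14.16, 14.18.2 and 14.18.4 of Glasner's book, so there is nothing to compare against. One cosmetic remark: in the ``in particular'' clause of (1) the individual summands of $H(\alpha_2\mid\alpha_1)$ are not literally of the form $-t\log t$ but rather $-\mu(A_i\cap C_j)\log\bigl(\mu(A_i\cap C_j)/\mu(A_i)\bigr)$, which is nonnegative because the ratio lies in $[0,1]$; alternatively, invoke the averaging formula you introduce for (2) to write $H(\alpha_2\mid\alpha_1)=\sum_i\mu(A_i)H_{\mu_{A_i}}(\alpha_2)\ge 0$.
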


The following argument is a straightforward generalization of the corresponding proof for $\mathbb{Z}$-systems given as Part I of Theorem 17.1 in \cite{Glas03}.

\begin{proof}[Proof of Theorem \ref{thm3}]

Let $\mu$ be an invariant measure for the topological $\Gamma$-system $\Gamma \actson^a X$. Let $\alpha = (A_i)_{i=1}^k$ be a measurable partition of $(X,\mu)$. Choose closed sets $B_i \subseteq A_i$ such that $\mu(A_i \triangle B_i)$ is small enough so $H(\alpha|\beta) \leq 1$ where $\beta$ is the partition $(B_i)_{i=1}^{k+1}$ and $B_{k+1} = X - \bigcup_{i =1 }^k B_i$. Then for any finite set $F \subseteq \Gamma$ by $(2)$ and $(3)$ of Proposition \ref{prop18} we have \begin{align*} H_\mu \left( \alpha^F \big \vert \beta^F \right) & \leq \sum_{\gamma \in F} H_\mu \left(\gamma^a \alpha \big \vert \beta^F \right) \\ & \leq \sum_{\gamma \in F} H_\mu (\gamma^a \alpha \vert \gamma^a \beta) \\ &= |F| \cdot H_\mu(\alpha|\beta) \\ & \leq |F|. \end{align*}

Hence by $(1)$ of Proposition \ref{prop18} we have \begin{align*} H_\mu\left(\alpha^F \right) & \leq H_\mu \left( \alpha^F \vee \beta^F \right) \\ & = H_\mu \left( \beta^F \right) + H_\mu \left( \alpha^F \big \vert \beta^F \right) \\ &\leq H_\mu \left( \beta^F \right) + |F| \end{align*}

and consequently \begin{align} h_\mu(\alpha) &= \inf_{F} \frac{1}{|F|} H_\mu\left(\alpha^F \right) \nonumber \\&\leq \inf_{F} \frac{1}{|F|} \left( H_\mu \left(\beta^F \right) + |F| \right) \nonumber \\&= h_\mu(\beta) +1. \label{eq50} \end{align}

Now let $U_i = B_i \cup B_{k+1}$. Then $X - U_i = \bigcup_{\substack{1\leq j \leq k,\\ j \neq i}} B_j$ so $U_i$ is open and $\mathscr{U} = (U_i)_{i=1}^k$ is an open cover of $X$. Note that the only elements of $\beta$ meeting $U_i$ are $B_i$ and $B_{k+1}$. Let $\mathscr{V}(F)$ be an open subcover of $\mathscr{U}^F$ with minimal cardinality. We claim that each element of $\mathscr{V}(F)$ meets at most $2^{|F|}$ elements of $\beta^F$. Indeed suppose $\phi: F \to [k]$ is a function such that $\bigcap_{\gamma \in F} \gamma^a U_{\phi(\gamma)} \in \mathscr{V}(F)$ and let $x \in \bigcap_{\gamma \in F} \gamma^a U_{\phi(\gamma)}$. Then if $\psi:F \to [k+1]$ is any function so that $x \in \bigcap_{\gamma \in F} \gamma^a B_{\psi(\gamma)} \in  \beta^F$ we must have $B_{\psi(\gamma)} \cap U_{\phi(\gamma)} \neq \emptyset$ and hence $\psi(\gamma) \in \{\phi(\gamma),k+1\}$ for all $\gamma \in F$. Therefore \[\left \vert \beta^F \right \vert \leq 2^{|F|} \left \vert \mathscr{V}(F) \right \vert. \] It follows that \begin{align} H_\mu\left (\beta^F \right) & \leq \log \left( \left \vert \beta^F \right \vert \right) \nonumber \\ &\leq \log \left(2^{|F|} \cdot \left \vert \mathscr{V}(F) \right \vert \right ) \nonumber \\ & \leq |F| \log 2 + \log\left( \left \vert \mathscr{V}(F) \right \vert \right) \nonumber \\ & = |F| \log 2 + \log\left( N \left( \mathscr{U}^F \right) \right) \label{eq51} \end{align}

and hence by $(\ref{eq50})$ and $(\ref{eq51})$ we have \begin{align*} h_\mu(\alpha) & \leq h_\mu(\beta) +1 \\ & = \left( \inf_F \frac{1}{|F|} H_\mu \left( \beta^F \right) \right) +1 \\ & \leq \left( \inf_F \frac{1}{|F|} \left( |F| \log 2 + \log \Bigl( N \left( \mathscr{U}^F \right) \right) \Bigr) \right) +1 \\ & = \htop(\mathscr{U}) + 1 + \log 2. \end{align*}

Therefore \[ \hnv \left(\Gamma \actson (X,\mu) \right) \leq \htop \left( \Gamma \actson X \right) + 1 + \log 2. \] Now observe that the measure $\mu^n$ on $X^n$ is invariant for the $n^{\mathrm{th}}$ Cartesian power of the system $\Gamma \actson X$. Therefore the same argument shows \begin{equation} \label{eq52} h_{\mathrm{nv}}\left(\Gamma \actson \left(X^n,\mu^n\right) \right) \leq h_{\mathrm{nv}}^{\mathrm{top}} \left( \Gamma \actson X^n \right) + 1 + \log 2. \end{equation} 

Immediate generalizations of the proofs of Theorems $14.14$ and $14.31$ in \cite{Glas03} show that both forms of naive entropy are additive under direct products. Thus $(\ref{eq52})$ implies \[  n \cdot h_{\mathrm{nv}}\left(\Gamma \actson (X,\mu) \right) \leq n \cdot h_{\mathrm{nv}}^{\mathrm{top}} \left( \Gamma \actson X \right) + 1 + \log 2 \]for all $n \geq 1$ and therefore we must have \[h_{\mathrm{nv}}\left(\Gamma \actson (X,\mu) \right) \leq h_{\mathrm{nv}}^{\mathrm{top}} \left( \Gamma \actson X \right) .\] \end{proof}

\subsection{Examples.} \label{sec24}

\begin{example} Let $(Y,\nu)$ be a standard probability space. Assume $\nu$ is not supported on a single point. Consider the Bernoulli shift $\Gamma \actson \left(X,\mu \right)$ where $X = Y^\Gamma$ and $\mu = \nu^\Gamma$. Let $\alpha = (A_1,A_2)$ be a partition of $(Y,\nu)$ with positive entropy and $\hat{\alpha} = \left( \hat{A}_1,\hat{A}_2 \right)$ be the partition of $\left(X,\mu \right)$ given by \[\hat{A}_i = \left\{ \omega \in X: \omega\left(e_\Gamma \right) \in A_i \right \},\] where $e_\Gamma$ is the identity of $\Gamma$. Then as in the case of a $\mathbb{Z}$-system distinct shifts of $\hat{\alpha}$ are independent and so we have $H_\mu\left(\hat{\alpha}^F \right) = |F| \cdot H_\mu\left(\hat{\alpha}\right)$. Thus \[ h_\mu\left(\hat{\alpha} \right) = H_\mu\left(\hat{\alpha} \right) = H_\nu(\alpha) > 0. \] By Theorem \ref{prop47} we see that if $\Gamma$ is nonamenable then $\hnv\left(\Gamma \actson \left(X, \mu \right) \right) = \infty$. Thus Theorem \ref{thm3} implies that the corresponding topological system $\Gamma \actson X$ has infinite naive entropy. \end{example}

\begin{example} Let $\Gamma \actson^a X$ be a topological system and $d$ a compatible metric on $X$. Recall that $\Gamma \actson^a X$ is said to be distal if for every pair $x_1,x_2$ of distinct points in $X$ we have $\inf_{\gamma \in \Gamma} d\left(\gamma^a x_1,\gamma^a x_2 \right) > 0$. In particular, an isometric system such as a circle rotation is distal.\\
\\
Now, suppose that $\Gamma \actson^a X$ is distal and $\Gamma$ has an element $\gamma$ of infinite order. Then $(X,\gamma^a)$ is a distal $\mathbb{Z}$-system. Theorem $18.19$ in \cite{Glas03} implies that distal $\mathbb{Z}$-systems have zero entropy. Thus Proposition \ref{prop25} guarantees that $\htop(\Gamma \actson^a X) = 0$. By Theorem \ref{thm3}, $\hnv(\Gamma \actson^a (X,\mu) ) = 0$ for any invariant measure $\mu$. It is likely that a distal $\Gamma$-system has zero naive topological entropy for an arbitrary $\Gamma$, but we were unable to prove this despite significant effort. \end{example}

\subsection{Proof of Theorem \ref{thm2}} \label{sec25}

We first show three preliminary lemmas.

\begin{lemma}\label{lem20} Let $\mathscr{U}$ be a finite open cover of a compact metrizable space $X$. Fix a finite set $F \subseteq \Gamma$ and $k \in \mathbb{N}$. Then \[Z(\mathscr{U},F,k) = \Biggl \{ (\Gamma \actson^a X) \in \mathrm{A}_{\mathrm{top}}(\Gamma,X): N \Biggl( \bigvee_{\gamma \in F} \gamma^a \mathscr{U} \Biggr) \leq k \Biggr \} \] is open. \end{lemma}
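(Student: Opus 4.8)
The plan is to show directly that $Z(\mathscr{U},F,k)$ contains an open neighbourhood of each of its points. Recall that the topology on $\mathrm{A}_{\mathrm{top}}(\Gamma,X)$ is that of uniform convergence in each group coordinate, so, having fixed a compatible metric $d$ on $X$, a basic open neighbourhood of a system $\Gamma\actson^a X$ is specified by a finite set $E\subseteq\Gamma$ and a radius $\rho>0$, consisting of all $b$ with $\sup_{x\in X}d(\gamma^a x,\gamma^b x)<\rho$ for every $\gamma\in E$. Now fix $\Gamma\actson^a X\in Z(\mathscr{U},F,k)$. Then $\bigvee_{\gamma\in F}\gamma^a\mathscr{U}$ admits a subcover of cardinality at most $k$, and since every member of this join has the form $\bigcap_{\gamma\in F}\gamma^a\phi(\gamma)$ for some function $\phi\colon F\to\mathscr{U}$, there are functions $\phi_1,\dots,\phi_k\colon F\to\mathscr{U}$ with $X=\bigcup_{j=1}^{k}W_j$, where $W_j:=\bigcap_{\gamma\in F}\gamma^a\phi_j(\gamma)$.

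The main obstacle is that a small uniform perturbation of the homeomorphisms $\gamma^a$ moves the open sets $\gamma^a\phi_j(\gamma)$ in no controlled way, so one cannot conclude directly that the sets $\bigcap_{\gamma\in F}\gamma^b\phi_j(\gamma)$ still cover $X$. The fix is to pass to compact ``cores'' with a uniform buffer, using compactness of $X$. Concretely, for each $x\in X$ choose $j(x)\in[k]$ with $x\in W_{j(x)}$ and $r(x)>0$ with $\overline{B_{r(x)}(x)}\subseteq W_{j(x)}$, and extract a finite subcover $B_{r(x_1)}(x_1),\dots,B_{r(x_m)}(x_m)$ of $X$. For $i\in[m]$ and $\gamma\in F$ the set $K_{i,\gamma}:=(\gamma^a)^{-1}\bigl(\overline{B_{r(x_i)}(x_i)}\bigr)$ is compact and, because $\overline{B_{r(x_i)}(x_i)}\subseteq W_{j(x_i)}\subseteq\gamma^a\phi_{j(x_i)}(\gamma)$, is contained in $\phi_{j(x_i)}(\gamma)$. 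Hence $\rho:=\min_{i\in[m],\,\gamma\in F}d\bigl(K_{i,\gamma},\,X\setminus\phi_{j(x_i)}(\gamma)\bigr)>0$, with the convention that the distance to an empty set is $1$, since each term is the distance between a compact set and a disjoint closed set.

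I then claim the basic open neighbourhood of $a$ determined by the finite set $F^{-1}=\{\gamma^{-1}:\gamma\in F\}$ and this $\rho$ lies inside $Z(\mathscr{U},F,k)$; this is where the second, minor point enters, namely that the relevant perturbation is of the inverses $(\gamma^b)^{-1}=(\gamma^{-1})^b$, which is why one controls $F^{-1}$. Indeed, suppose $\Gamma\actson^b X$ satisfies $\sup_{x\in X}d\bigl((\gamma^{-1})^a x,(\gamma^{-1})^b x\bigr)<\rho$ for all $\gamma\in F$, and let $x\in X$; pick $i$ with $x\in B_{r(x_i)}(x_i)$ and put $j=j(x_i)$. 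For every $\gamma\in F$ we have $(\gamma^a)^{-1}x\in K_{i,\gamma}$ and $d\bigl((\gamma^b)^{-1}x,(\gamma^a)^{-1}x\bigr)<\rho\le d\bigl(K_{i,\gamma},X\setminus\phi_j(\gamma)\bigr)$, so $(\gamma^b)^{-1}x\in\phi_j(\gamma)$, i.e.\ $x\in\gamma^b\phi_j(\gamma)$; hence $x\in\bigcap_{\gamma\in F}\gamma^b\phi_j(\gamma)$. Thus the $k$ sets $\bigcap_{\gamma\in F}\gamma^b\phi_j(\gamma)$, all of which lie in $\bigvee_{\gamma\in F}\gamma^b\mathscr{U}$, cover $X$, so $N\bigl(\bigvee_{\gamma\in F}\gamma^b\mathscr{U}\bigr)\le k$ and $\Gamma\actson^b X\in Z(\mathscr{U},F,k)$, as needed. (The degenerate cases $F=\emptyset$ or $X=\emptyset$ make $Z(\mathscr{U},F,k)$ either all of $\mathrm{A}_{\mathrm{top}}(\Gamma,X)$ or empty, so the statement is immediate there.)
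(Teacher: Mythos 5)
Your proof is correct and follows essentially the same route as the paper's: fix a subcover of $\bigvee_{\gamma\in F}\gamma^a\mathscr{U}$ indexed by functions $\phi_j\colon F\to\mathscr{U}$, produce a uniform margin by compactness, and show that any action $b$ whose inverses $(\gamma^{-1})^b$ lie uniformly within that margin of $(\gamma^{-1})^a$ for all $\gamma\in F$ still admits the cover by the sets $\bigcap_{\gamma\in F}\gamma^b\phi_j(\gamma)$. The only (harmless) difference is how the margin is obtained: the paper uses a Lebesgue number of the subcover together with uniform continuity of the finitely many maps $\gamma^a$, whereas you use compact cores $\overline{B_{r(x_i)}(x_i)}$ pulled back by $(\gamma^a)^{-1}$ and the positive distance from a compact set to a disjoint closed set.
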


\begin{proof} Write $\mathscr{U} = (U_i)_{i=1}^n$. Let $(\Gamma \actson^a X) \in Z(\mathscr{U},F,k)$ and let $\mathscr{V}$ be a subcover of $\bigvee_{\gamma \in F} \gamma^a \mathscr{U}$ with cardinality $\leq k$. Let $d$ be a compatible metric on $X$ and let $d_u$ be the metric \[d_u(f,g) = \sup_{x \in X} d(f(x),g(x)). \] Note that to obtain the uniform topology on $\mathrm{Homeo}(X)$ we must use the metric \[d'_u(f,g) = d_u(f,g) + d_n(f^{-1},g^{-1}).\] However the topology induced by $d_u$ on $\mathrm{A}_{\mathrm{top}}(\Gamma,X)$ is the same as the one induced by $d'_u$ so we will continue to work with the former.\\
\\
Let $\epsilon$ be a Lebesgue number for $\mathscr{V}$ with respect to $d$. Let $(\phi_j)_{j=1}^k$ be a sequence of functions from $F$ to $[n]$ so that \[ \mathscr{V} = \Biggl ( \bigcap_{ \gamma \in F} \gamma^a U_{\phi_j(\gamma)}  \Biggr )_{j=1}^k. \]  Let $\delta > 0$ be small enough that for all $\gamma \in F$ and $x_1,x_2 \in X$, $d(x_1,x_2) < \delta$ implies $d(\gamma^a x_1,\gamma^a x_2) < \epsilon$. Then for any $x \in X$, $\left(\gamma^{-1} \right)^a B_\epsilon(x)$ contains $B_\delta\left( \left(\gamma^{-1} \right)^a x \right)$. Suppose $d_u\left( \left(\gamma^{-1} \right)^a,\left(\gamma^{-1} \right)^b \right) < \delta$ for all $\gamma \in F$. We claim \[ \Biggl ( \bigcap_{ \gamma \in F} \gamma^b U_{\phi_j(\gamma)}  \Biggr )_{j =1}^k \] is a cover of $X$. Let $x \in X$. Then there is $j \leq k$ so that $B_\epsilon(x) \subseteq  \bigcap_{ \gamma \in F} \gamma^a U_{\phi_j(\gamma)}$, equivalently $\left( \gamma^{-1} \right)^a B_\epsilon(x) \subseteq U_{\phi_j(\gamma)}$ for all $\gamma \in F$. Since $d\left( \left(\gamma^{-1} \right)^a x, \left(\gamma^{-1} \right)^b x \right) < \delta$, we see that $\left( \gamma^{-1} \right)^b x \in U_{\phi_j(\gamma)}$. Therefore $x \in \gamma^b U_{\phi_j(\gamma)}$ for all $\gamma \in F$. \end{proof}

\begin{lemma}\label{lem21} For any system $\Gamma \actson X$, if $(\mathscr{U}_n)_{n=1}^\infty$ is a sequence of finite open covers such that $\lim_{n \to \infty} \mathrm{diam}(\mathscr{U}_n) = 0$, then $\lim_{n \to \infty} h^{\mathrm{tp}}(\mathscr{U}_n) = \htop(\Gamma \actson X)$. \end{lemma}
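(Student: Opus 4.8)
\emph{Proof plan.} The plan is to sandwich the sequence $\htop(\mathscr{U}_n)$ between bounds that force it to converge to $\htop(\Gamma \actson X)$. One inequality is immediate from the definition: since $\htop(\Gamma \actson X) = \sup\{\htop(\mathscr{U}) : \mathscr{U}$ a finite open cover of $X\}$, we have $\htop(\mathscr{U}_n) \leq \htop(\Gamma \actson X)$ for every $n$, hence $\limsup_{n} \htop(\mathscr{U}_n) \leq \htop(\Gamma \actson X)$. All the content lies in showing $\liminf_{n} \htop(\mathscr{U}_n) \geq \htop(\Gamma \actson X)$.

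The key step, which I would isolate first, is monotonicity of $\htop(\cdot)$ under refinement: if a finite open cover $\mathscr{A}$ refines a finite open cover $\mathscr{B}$, in the sense that every member of $\mathscr{A}$ is contained in some member of $\mathscr{B}$, then $\htop(\mathscr{A}) \geq \htop(\mathscr{B})$. To see this, observe that for each $\gamma \in \Gamma$ the cover $\gamma^a \mathscr{A}$ refines $\gamma^a \mathscr{B}$ (homeomorphisms preserve the inclusion relation), that the join of a family of refinements refines the join of the originals, and hence that $\mathscr{A}^F$ refines $\mathscr{B}^F$ for every finite $F \subseteq \Gamma$. Whenever one cover refines another the minimal-subcover cardinality can only increase — map each member of a minimal subcover of $\mathscr{A}^F$ to a member of $\mathscr{B}^F$ containing it — so $N(\mathscr{A}^F) \geq N(\mathscr{B}^F)$. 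Dividing by $|F|$, taking logarithms, and taking the infimum over $F$ yields $\htop(\mathscr{A}) \geq \htop(\mathscr{B})$.

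With this in hand I would fix an arbitrary finite open cover $\mathscr{V}$ of $X$ and let $\delta > 0$ be a Lebesgue number for $\mathscr{V}$, so that every subset of $X$ of diameter less than $\delta$ lies in a member of $\mathscr{V}$. Since $\mathrm{diam}(\mathscr{U}_n) \to 0$, there is $N_0$ with $\mathrm{diam}(\mathscr{U}_n) < \delta$ for all $n \geq N_0$; for such $n$ every element of $\mathscr{U}_n$ has diameter less than $\delta$, so $\mathscr{U}_n$ refines $\mathscr{V}$ and therefore $\htop(\mathscr{U}_n) \geq \htop(\mathscr{V})$ by the previous step. Letting $n \to \infty$ gives $\liminf_n \htop(\mathscr{U}_n) \geq \htop(\mathscr{V})$, and since $\mathscr{V}$ was arbitrary, taking the supremum over finite open covers gives $\liminf_n \htop(\mathscr{U}_n) \geq \htop(\Gamma \actson X)$. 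Together with the upper bound this proves $\lim_n \htop(\mathscr{U}_n) = \htop(\Gamma \actson X)$; in the case $\htop(\Gamma \actson X) = \infty$ one chooses, for each $r$, a finite open cover $\mathscr{V}$ with $\htop(\mathscr{V}) \geq r$ — note $\htop(\mathscr{V}) \leq \log |\mathscr{V}| < \infty$ for any single cover, so such covers exist — and concludes $\liminf_n \htop(\mathscr{U}_n) \geq r$ for all $r$. I do not anticipate a genuine obstacle; the only points requiring care are the refinement-monotonicity statement above and the invocation of the Lebesgue number lemma for a finite open cover of a compact metric space.
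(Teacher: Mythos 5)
Your proposal is correct and follows essentially the same route as the paper: the paper's proof likewise notes that refinement increases $h^{\mathrm{tp}}$ and then chooses $n$ so that $\mathrm{diam}(\mathscr{U}_n)$ is below the Lebesgue number of an arbitrary cover $\mathscr{V}$, giving $h^{\mathrm{tp}}(\mathscr{V}) \leq h^{\mathrm{tp}}(\mathscr{U}_n)$. You simply spell out the refinement-monotonicity and the trivial upper bound in more detail than the paper does.
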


\begin{proof} It is clear that if $\mathscr{U}$ refines $\mathscr{V}$ then $h^{\mathrm{tp}}(\mathscr{V}) \leq h^{\mathrm{tp}}(\mathscr{U})$. Thus if $\mathscr{V}$ is an arbitrary open cover of $X$, by choosing $n$ so that $\mathrm{diam}(\mathscr{U}_n)$ is less than the Lebegsue number of $\mathscr{V}$ we have $h^{\mathrm{tp}}(\mathscr{V}) \leq h^{\mathrm{tp}}(\mathscr{U}_n)$. \end{proof}

\begin{lemma} \label{lem22} For any countable group $\Gamma$ and compact metrizable space $X$, the set of systems with zero naive topological entropy is $G_\delta$ in $\mathrm{A}_{\mathrm{top}}(\Gamma,X)$. \end{lemma}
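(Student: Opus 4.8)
The plan is to express the set of zero-entropy systems as a countable intersection of open sets, using the two preceding lemmas. The key point is that naive topological entropy is a supremum over finite open covers, but by Lemma~\ref{lem21} we may replace this supremum by the limit along any fixed sequence $(\mathscr{U}_n)$ of finite open covers with $\mathrm{diam}(\mathscr{U}_n) \to 0$; since $X$ is compact metrizable, such a sequence exists. Thus $\htop(\Gamma \actson^a X) = 0$ if and only if $\htop(\mathscr{U}_n) = 0$ for every $n$.

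**Next I would** unpack the condition $\htop(\mathscr{U}) = 0$ for a single finite open cover $\mathscr{U}$. By definition $\htop(\mathscr{U}) = \inf_F \frac{1}{|F|}\log N(\mathscr{U}^F)$, and since $N(\mathscr{U}^F) \geq 1$ always, this infimum is $0$ precisely when it can be made arbitrarily small, i.e. for every $m \in \mathbb{N}$ there exist a nonempty finite $F \subseteq \Gamma$ and $k \in \mathbb{N}$ with $\frac{1}{|F|}\log k \leq \frac{1}{m}$ and $N(\mathscr{U}^F) \leq k$. Enumerating the (countably many) pairs $(F,k)$ with $F \subseteq \Gamma$ nonempty finite and $k \in \mathbb{N}$, we can write
\[
\{ a : \htop(\mathscr{U}) = 0 \} = \bigcap_{m=1}^\infty \ \bigcup_{\substack{(F,k):\ |F|\log k \leq |F|/m \cdot \log(\cdot)\ \text{no---}}} \cdots
\]
--- more precisely, $\{ a : \htop(\mathscr{U}) = 0\} = \bigcap_{m \geq 1} \bigcup \{ Z(\mathscr{U},F,k) : F,k \text{ with } \log k \leq |F|/m \}$, where $Z(\mathscr{U},F,k)$ is the set from Lemma~\ref{lem20}. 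Each $Z(\mathscr{U},F,k)$ is open by Lemma~\ref{lem20}, so each inner union is open, and hence $\{a : \htop(\mathscr{U}) = 0\}$ is $G_\delta$. Intersecting over $n$, the set $\{a : \htop(\Gamma\actson^a X) = 0\} = \bigcap_n \{a : \htop(\mathscr{U}_n) = 0\}$ is a countable intersection of $G_\delta$ sets, hence $G_\delta$.

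**The main thing to get right** is the interchange in the first paragraph: I need that for a system $a$, $\htop(\Gamma\actson^a X) = 0 \iff \htop(\mathscr{U}_n) = 0$ for all $n$. The forward direction is immediate since $\htop(\mathscr{U}_n) \leq \htop(\Gamma\actson^a X)$. For the reverse, if $\htop(\mathscr{U}_n) = 0$ for all $n$ then, given an arbitrary finite open cover $\mathscr{V}$, Lemma~\ref{lem21}'s proof shows $\htop(\mathscr{V}) \leq \htop(\mathscr{U}_n)$ once $\mathrm{diam}(\mathscr{U}_n)$ is below the Lebesgue number of $\mathscr{V}$, hence $\htop(\mathscr{V}) = 0$; taking the supremum over $\mathscr{V}$ gives $\htop(\Gamma\actson^a X) = 0$. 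I should double-check that the same fixed sequence $(\mathscr{U}_n)$ works simultaneously for all systems $a$ (it does — the covers are chosen purely from the topology of $X$, independent of the action) so that the countable intersection is over a genuinely fixed countable family. No serious obstacle is expected; the argument is a routine descriptive-set-theoretic assembly of Lemmas~\ref{lem20} and~\ref{lem21}, and I would present it in three or four short sentences.
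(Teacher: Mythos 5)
Your proposal is correct and follows essentially the same route as the paper: reduce to a fixed countable family of covers $(\mathscr{U}_n)$ with diameters tending to $0$ via Lemma~\ref{lem21}, and express the condition $\htop(\mathscr{U}_n)=0$ as a countable intersection over $m$ of unions of the open sets $Z(\mathscr{U}_n,F,k)$ from Lemma~\ref{lem20} (the paper packages the union over pairs $(F,k)$ with $\log k \leq |F|/m$ as $\bigcup_F Z\left(\mathscr{U}_n,F,\lfloor \exp(|F|/m)\rfloor\right)$, which is the same thing). Your explicit check that the fixed sequence of covers works uniformly in the action $a$ is exactly the point the paper relies on, so there is nothing further to add.
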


\begin{proof} If $\mathscr{U}$ is an open cover of $X$, $F \subseteq \Gamma$ is finite and $\epsilon > 0$ set \[ \tilde{Z}(\mathscr{U},F,\epsilon) = \left\{ (\Gamma \actson^a X) \in \mathrm{A}_{\mathrm{top}}(\Gamma,X) : \frac{1}{|F|} \log\left( N \left( \bigvee_{\gamma \in F} \gamma^a \mathscr{U} \right) \right) < \epsilon \right \}.  \] Note that in the notation of Lemma \ref{lem20}, we have \[ \tilde{Z}(\mathscr{U},F,\epsilon)  = Z \left(\mathscr{U},F, \lfloor \exp(\epsilon|F|) \rfloor \right) \] hence $\tilde{Z}(\mathscr{U},F,\epsilon)$ is open. If $(\mathscr{U}_n)_{n=1}^\infty$ is a sequence of finite open covers with $\lim_{n \to \infty} \mathrm{diam}(\mathscr{U}_n) = 0$ then by Lemma \ref{lem21}, the set of systems with zero naive topological entropy is equal to the $G_\delta$ set \[ \bigcap_{n=1}^\infty \bigcap_{k=1}^\infty \bigcup_F \tilde{Z}\left(\mathscr{U}_n,\frac{1}{k},F \right), \] where the union is over all nonempty finite subsets of $\Gamma$. \end{proof}

\begin{proof}[Proof of Theorem \ref{thm2}.]  By Lemma \ref{lem22}, it suffices to show the set of systems with zero entropy is dense in $\mathrm{A}_{\mathrm{top}}\left(\Gamma, 2^{\mathbb{N}} \right)$. By Corollary 2.5 in \cite{GW01}, the set of homeomorphisms with zero entropy is uniformly dense in $\mathrm{Homeo} \left(2^\mathbb{N} \right)$. Therefore the set of systems in $\mathrm{A}_{\mathrm{top}}\left(\Gamma,2^\mathbb{N} \right)$ for which the first generator of $\Gamma$ acts with zero entropy is dense. The theorem follows from this fact and Proposition \ref{prop25}. \end{proof}

\section{Sofic groups and sofic entropy.}

\subsection{Sofic groups.}

Sofic groups were introduced by Gromov in \cite{Gro99} and Weiss in \cite{Wei00}. Let $\mathrm{Sym}(n)$ denote the symmetric group on $n$ letters. Let $u_n$ denote the uniform probability measure on $[n]$ so that $u_n(A) = \frac{|A|}{n}$. In keeping with our convention for dynamical systems, if $\sigma$ is a function from $\Gamma$ to $\mathrm{Sym}(n)$ we write $\gamma^\sigma m$ for $\sigma(\gamma)(m)$.

\begin{definition}\label{def1} Let $\Gamma$ be a countable discrete group. Let $\Sigma = (\sigma_i)_{i=1}^\infty$ be a sequence of functions $\sigma_i: \Gamma \to \mathrm{Sym}(n_i)$ such that $n_i \to \infty$. Note that the $\sigma_i$ are not assumed to be homomorphisms. We say $\Sigma$ is a \textbf{sofic approximation} to $\Gamma$ if for every pair $\gamma_1,\gamma_2 \in \Gamma$ we have \[ \lim_{i \to \infty} u_{n_i}(\{m \in [n_i]: (\gamma_1 \gamma_2)^{\sigma_i} m = \gamma_1^{\sigma_i} \gamma_2^{\sigma_i} m \}) = 1,\] and for every pair $\gamma_1 \neq \gamma_2$ we have \[ \lim_{i  \to \infty} u_{n_i} (\{m \in [n_i]: \gamma_1^{\sigma_i} m \neq \gamma_2^{\sigma_i} m \}) = 1.\] We say $\Gamma$ is \textbf{sofic} if there exists a sofic approximation to $\Gamma$. \end{definition}

Thus the first condition guarantees that the $\sigma_i$ are asymptotically homomorphisms, and the second condition guarantees that the corresponding approximate actions on $[n_i]$ are asymptotically free. The standard examples of sofic groups are residually finite groups and amenable groups. It is unknown whether every countable group is sofic.

\subsection{Topological sofic entropy.}

In \cite{KeLi11} and \cite{KeLi13}, Kerr and Li developed a topological counterpart to Bowen's theory of sofic entropy, based initially on operator-algebraic considerations. We will use the `spatial' formulation of these ideas. Fix a group $\Gamma$ and a topological $\Gamma$-system $\Gamma \actson^a X$. Fix a compatible metric $d$ for $X$. Define the metrics $d^2$ and $d^\infty$ on the set of maps from $[n]$ to $X$ by \[ d^2(\phi,\psi) = \left( \frac{1}{n} \sum_{m=1}^n d\left( \phi(m),\psi(m) \right)^2 \right)^{\frac{1}{2}} \] and \[ d^\infty(\phi,\psi) = \max_{m \in [n]} d(\phi(m),\psi(m)).\]

\begin{definition} Let $F \subseteq \Gamma$ be finite, $\delta > 0$ and $\sigma: \Gamma \to \mathrm{Sym}(n)$. Define $\mathrm{Map}(\sigma,F,\delta)$ to be the collection of functions $\phi: [n] \to X$ such that $d^2(\phi \circ \gamma^\sigma,\gamma^a \circ \phi) \leq \delta$ for all $\gamma \in F$. \end{definition}

\begin{definition} Let $\Sigma = (\sigma_i)_{i=1}^\infty$ be a sofic approximation to $\Gamma$ with $ \sigma_i \in \mathrm{Sym}(n_i)^\Gamma$. Define the \textbf{topological sofic entropy} $h^{\mathrm{tp}}_\Sigma(\Gamma \actson^a X)$ of $\Gamma \actson^a X$ with respect to $\Sigma$ as follows. Letting $F$ range over the nonempty finite subsets of $\Gamma$ and $\delta,\epsilon > 0$ define \begin{align*} h^{\mathrm{tp}}_\Sigma(\delta,F,\epsilon) &= \limsup_{i \to \infty} \frac{1}{n_i} \log( \sep(\mathrm{Map}(\sigma_i,F,\delta),\epsilon,d^\infty)), \\ h^{\mathrm{tp}}_\Sigma(F,\epsilon) &= \inf_{\delta > 0} h^{\mathrm{tp}}_\Sigma(\delta,F,\epsilon), \\ h^{\mathrm{tp}}_\Sigma(\epsilon) &= \inf_F h^{\mathrm{tp}}_\Sigma(F,\epsilon), \\ h^{\mathrm{tp}}_\Sigma(\Gamma \actson^a X) &= \sup_{\epsilon > 0} h^{\mathrm{tp}}_\Sigma(\epsilon).\end{align*} \end{definition}

\section{Proof of Theorem \ref{thm1}}\label{sec4}

This argument builds on the framework used to prove Lemma 5.1 in \cite{KeLi13}.

\subsection{Choosing parameters}

In this subsection we set the values of some initial parameters for our construction. Let $\Sigma = (\sigma_n)_{n=1}^\infty$ be a sofic approximation to $\Gamma$, where $\sigma_n: \Gamma \to \mathrm{Sym}(n)$. The case where $\sigma_n$ is a function from $\Gamma$ to $[k_n]$ for some $k_n \neq n$ can be handled with trivial modifications. Choose $\kappa$ with $0 < \kappa <1$. It suffices to show that $h^{\mathrm{tp}}_{\Sigma}(\Gamma \actson^a X) \leq \kappa$. Choose $\epsilon > 0$, so that it suffices to show that $h^{\mathrm{tp}}_{\Sigma}(\epsilon) \leq \kappa$. Let \begin{equation} \label{eq30} \eta = \frac{\kappa}{4 \log\left(\sep\left(X,\frac{\epsilon}{2},d\right)\right)} \end{equation} and choose $k \in \mathbb{N}$ such that \begin{equation}\label{eq31} \frac{1}{k} \leq \frac{\eta}{2}.\end{equation} By our assumption that $\htop(\Gamma \actson^a X) = 0$, we can choose a finite set $F \subseteq \Gamma$ such that \begin{equation} \label{eq1} \frac{1}{|F|} \log \left(\sep\left(X,\frac{\epsilon}{4},d_F \right) \right) \leq \frac{\kappa}{4k}. \end{equation} 

\begin{lemma}\label{lem4} Let $F' \subseteq F$ be such that $|F'| \geq \frac{|F|}{k}$. Then \[\sep\left(X,\frac{\epsilon}{4},d_{F'} \right) \leq \exp\left(\frac{\kappa |F'|}{4} \right).\] \end{lemma}

 \begin{proof}[Proof of Lemma \ref{lem4}] Since \[\sep\left(X,\frac{\epsilon}{4},d_{F'} \right) \leq \sep\left(X,\frac{\epsilon}{4},d_F \right),\] we have \begin{align*} \frac{1}{|F'|} \log \left( \sep\left(X,\frac{\epsilon}{4},d_{F'} \right) \right) & \leq \frac{1}{|F'|} \log\left( \sep\left(X,\frac{\epsilon}{4},d_F \right) \right) \\ & \leq k \left(\frac{1}{|F|} \log\left(\sep\left(X,\frac{\epsilon}{4},d_F \right)\right) \right) \\ & \leq \frac{\kappa}{4} \end{align*} where the last inequality follows from $(\ref{eq1})$. \end{proof}

Write $s = |F|$. Let $\delta > 0$ be small enough that  \begin{equation} \label{eq8} \delta \leq \left(\frac{\epsilon}{8} \right)^2,\end{equation} \begin{equation} \label{eq14} \delta \leq \frac{\eta}{4s^3 } \end{equation} (so in particular $s \delta < 1$) and finally \begin{equation} \label{eq7} -(s \delta \log(s \delta) + (1- s \delta) \log(1-s \delta)) \leq \frac{\kappa}{4}.\end{equation}

For a finite $S \subseteq \Gamma$ let \begin{align*} Q(S)_n & = \{m \in [n]: (\gamma_1 \gamma_2)^{\sigma_n} m = \gamma_1^{\sigma_n} \gamma_2^{\sigma_n} m \mbox{ for all } \gamma_1,\gamma_2 \in S \} \\ & \cap \{m \in [n]: \gamma_1^{\sigma_n} m \neq \gamma_2^{\sigma_n} m  \mbox{ for all } \gamma_1 \neq \gamma_2 \in S \}  \end{align*}

Write $\hat{F}$ for the symmetrization of $F$. Since $\Sigma$ is a sofic approximation, we can find $N$ so that if $n \geq N$ then \begin{equation}\label{eq60} \bigl \vert Q(\hat{F})_n \bigr \vert  \geq \left(1-  \frac{\eta}{4s^2} \right) n. \end{equation}

\subsection{Choosing a separated subset}

In this subsection we find a large $\epsilon$-separated subset $V$ of $\mathrm{Map}(\sigma,F,\delta)$ such that every element of $V$ is approximately equivariant on a fixed large subset of $[n]$. Fix $n \geq N$ and write $\sigma = \sigma_n$. Let $D$ be an $\epsilon$-separated subset of $\mathrm{Map}(\sigma,F,\delta)$ with respect to $d^\infty$ of maximal cardinality. For every $\phi \in \mathrm{Map}(\sigma,F,\delta)$ by definition we have $d^2(\phi \circ \gamma^\sigma, \gamma^a \circ \phi) \leq \delta$ for all $\gamma \in F$. Explicitly, \[ \left( \frac{1}{n} \sum_{m=1}^n d\bigl(\phi( \gamma^\sigma m),\gamma^a\phi(m)\bigr) ^2 \right)^{\frac{1}{2}} \leq \delta.\] Hence for each fixed $\gamma \in F$ at least $(1-\delta)n$ elements $m$ of $[n]$ have $d\left(\phi\left(\gamma^\sigma m\right),\gamma^a \phi(m)\right) \leq \sqrt{\delta}$. Hence the set $\Theta_\phi$ of all $m \in [n]$ such that $d\left(\phi \left(\gamma^\sigma m \right),\gamma^a \phi(m)\right) \leq \sqrt{\delta}$ for all $\gamma \in F$ has size at least $(1-s\delta) n$. \\
\\
By a standard estimate from information theory (see for example Lemma $16.19$ in \cite{FlGr06}) the number of subsets of $[n]$ of size at most $s \delta n$ is at most \[ \exp \bigl(-n(s \delta \log(s \delta) + (1- s \delta) \log(1-s \delta)) \bigr)\] and by (\ref{eq7}) this is bounded above by $ \exp \left( \frac{\kappa n}{4} \right)$. Hence there at at most $\exp \left( \frac{\kappa n}{4} \right)$ possible choices for the sets $\{\Theta_\phi: \phi \in  D \}$ and thus there are at least $\exp\left(-\frac{\kappa n}{4} \right)|D|$ elements of $D$ for which $\Theta_\phi$ is the same. So we can find $V \subseteq D$ and $\Theta \subseteq [n]$ such that \begin{equation} \label{eq21} |D| \leq \exp \left( \frac{\kappa n}{4} \right) |V| \end{equation} and for all $\phi \in V$ we have $\Theta_\phi = \Theta$. Note that since $|\Theta| \geq (1 - s \delta)n$, $(\ref{eq14})$ implies that \begin{equation} \label{eq10} |[n] - \Theta| \leq \frac{\eta n}{4s^2}. \end{equation} Furthermore, by $(\ref{eq8})$ and the definition of $\Theta$, for all $\phi \in V$ and all $m \in \Theta$ we have \begin{equation} \label{eq26} d\left(\phi \left(\gamma^\sigma m \right),\gamma^a \phi(m)\right) \leq \frac{\epsilon}{8}. \end{equation}

\subsection{Disjoint subsets of the sofic graph}

Endow $[n]$ with the structure of the graph $G_\sigma$ corresponding to $\sigma$, where $m_1$ is connected to $m_2$ if and only if there is $\gamma \in F$ such that $(\gamma)^\sigma m_1 = m_2$ or $\left(\gamma^{-1} \right)^\sigma m_1 = m_2$. In this section we find a maximal collection of disjoint subsets of $G_\sigma$ which resemble a nontrivial part of $F$.\\
\\
By $(\ref{eq60})$ and $(\ref{eq10})$, \[|G_\sigma - (Q(\hat{F})_n \cap \Theta)| \leq \frac{\eta n}{2s^2}.\]  Let $J$ be the collection of points $c$ in $G_\sigma$ such that the ball of radius $1$ around $c$ in $G_\sigma$ is contained in $Q(\hat{F})_n \cap \Theta$, and let $I$ be the collection of points $c$ in $J$ such that the ball of radius $1$ around $c$ is contained in $J$. Then \[ |G_\sigma - J| \leq s \cdot |G_\sigma - (Q(\hat{F})_n \cap \Theta)| \leq \frac{\eta n }{2s} \] and \begin{equation} \label{eq15} |G_\sigma - I| \leq s \cdot |G_\sigma - J| \leq \frac{\eta n}{2}. \end{equation}

If $c \in J$ then the mapping from $F$ to $G_\sigma$ given by $\gamma \mapsto \gamma^\sigma c$ is injective. We now begin an inductive procedure. Choose $c_1 \in J$ and take $F_1 = F$. Suppose we have chosen $c_1,\ldots,c_j \in J$ and $F_1,\ldots,F_j \subseteq F$ such that the sets $\left(F_i^\sigma c_i \right)_{i=1}^j$ are pairwise disjoint and $\frac{|F|}{k} \leq |F_i|$ for all $i \in \{1,\ldots,j\}$. Write $F_i^\sigma c_i = B_i$ \\
\\
Assume we cannot extend this process further, so that there do not exist $c_{j+1}$ and $F_{j+1}$ satisfying the two conditions. Write $W = \bigcup_{i=1}^j B_i$. Our assumption implies that for every $c \in J$, at least $\left(1-\frac{1}{k} \right) |F|$ of the points in $F^\sigma c$ lie in $W$. Suppose toward a contradiction that $\frac{|J|}{k} < |I - W|$. For each point $b$ in $I$, there are exactly $|F|$ points $c \in J$ such that $b \in F^\sigma c$, in symbols $|\{c \in J:b \in F^\sigma c\}| = |F|$. Indeed $b \in F^{\sigma}c$ if and only if $b = \gamma^\sigma c$ for some $c \in F$. Since $b,c \in Q(F)_n$, this is equivalent to $ \left(\gamma^{-1} \right)^\sigma b = c$. Since $b \in Q(F^{-1})_n$, the map $\gamma^{-1} \mapsto \left(\gamma^{-1}\right)^\sigma b$ is injective. Therefore \begin{align*} |\{c \in J:b \in F^\sigma c\}| & = |\{c \in J: c \in \left( F^{-1} \right)^\sigma b \}| \\ & = |F^{-1}| \\ &= |F|. \end{align*}

So we have \[ \sum_{b \in I - W} |\{c \in J:b \in F^\sigma c\}| = |F| \cdot |I - W| > \frac{|F| \cdot |J|}{k}. \] We can write \[ \sum_{b \in I - W} |\{c \in J:b \in F^\sigma c\}| = \sum_{b \in I - W} \sum_{c \in J} \mathbf{1}_{F^\sigma c}(b),\] where $\mathbf{1}_Y$ is the characteristic function of $Y$. So we have \[ \sum_{c \in J} \sum_{b \in I - W} \mathbf{1}_{F^\sigma c}(b) > \frac{|F| \cdot |J|}{k}. \] Since there are $|J|$ terms in the outer sum, there must be some $c_0 \in J$ with \[\sum_{b \in I - W} \mathbf{1}_{F^\sigma c_0}(b) > \frac{|F|}{k},\] or equivalently $|(I - W ) \cap F^\sigma c_0| > \frac{|F|}{k}$. Thus $|W \cap F^\sigma c_0| < \left(1 - \frac{1}{k} \right) |F|$, which contradicts our assumption. It follows that for a maximal pair of sequences $(c_i)_{i=1}^j$ and $(F_i)_{i=1}^j$ satisfying the relevant conditions, we have \begin{equation} \label{eq25} \left \vert I - W \right \vert \leq \frac{|J|}{k}. \end{equation} Fix such a maximal pair $(c_i)_{i=1}^j$ and $(F_i)_{i=1}^j$. Note that by our choice of $k$ in $(\ref{eq31})$ we have \begin{equation} \label{eq24} \frac{|J|}{k} \leq \frac{n}{k} \leq \frac{\eta n}{2}. \end{equation} Therefore if we put $P = G_\sigma - W$ then by $(\ref{eq15})$, $(\ref{eq25})$ and $(\ref{eq24})$ we have  \begin{align} |P| &\leq |G_\sigma - I| + \left \vert I - W \right \vert \nonumber \\ & \label{eq18} \leq \frac{\eta n}{2} + \frac{\eta n}{2} = \eta n. \end{align} 

\subsection{Controlling sofic entropy by naive entropy}

In this subsection we use the data previously constructed to bound the size of an appropriately separated subset of $\mathrm{Map}(\sigma,F,\delta)$ in terms of the separation numbers used to compute naive entropy. For $B \subseteq [n]$, let $d^\infty_B$ be the pseudometric on the collection of maps from $[n]$ to $X$ given by $d^\infty_B(\phi,\psi) = \max_{m \in B} d(\phi(m),\psi(m))$. Let $i \leq j$ and take an $\frac{\epsilon}{2}$-spanning set $V_i$ of $V$ of minimal cardinality with respect to the pseudometric $d^\infty_{B_i}$. We claim \[|V_i| \leq \exp\left(\frac{\kappa |F_i|}{4} \right).\] To see this, let $U$ be a maximal $\frac{\epsilon}{2}$-separated subset of $V$ with respect to $d^\infty_{B_i}$. Then $U$ is also $\frac{\epsilon}{2}$-spanning with respect to $d^\infty_{B_i}$ and hence $|V_i| \leq |U|$. For any two elements $\phi$ and $\psi$ of $V$ we have $c_i \in J \subseteq \Theta = \Theta_\psi = \Theta_\phi$. Since $F_i \subseteq F$ it follows from $(\ref{eq26})$ that $d\left(  \gamma^a \phi(c_i) , \phi \left( \gamma^\sigma c_i \right) \right) \leq \frac{\epsilon}{8}$ for all $\gamma \in F_i$, and similarly for $\psi$. So for all $\gamma \in F_i$ we have \begin{align} d\left(\gamma^a \phi(c_i),\gamma^a \psi(c_i)\right) &\geq d\left(\phi\left(\gamma^\sigma c_i\right) , \psi\left(\gamma^\sigma c_i\right)\right) -d\left(\gamma^a \phi(c_i),\phi\left(\gamma^\sigma c_i\right)\right) - d\left(\gamma^a \psi(c_i) , \psi\left(\gamma^\sigma c_i \right) \right) \nonumber \\ & \label{eq17} \geq d \left( \phi\left( \gamma^\sigma c_i \right), \psi\left( \gamma^\sigma c_i \right) \right) - \frac{\epsilon}{4}. \end{align}

Now, since $U$ is $\frac{\epsilon}{2}$-separated with respect to $d^\infty_{B_i}$, for any $\phi, \psi \in U$ we have \begin{equation} \label{eq27} d^\infty_{B_i}(\phi,\psi) = \max_{b \in B_i} d(\phi(b),\psi(b)) = \max_{\gamma \in F_i} d\left(\phi\left(\gamma^\sigma c_i\right),\psi \left(\gamma^\sigma c_i \right) \right) \geq \frac{\epsilon}{2}. \end{equation} By $(\ref{eq17})$ and $(\ref{eq27})$, \begin{align*} d_{F_i}(\phi(c_i),\psi(c_i)) &= \max_{\gamma \in F_i} d\left( \gamma^a \phi(c_i), \gamma^a \psi(c_i) \right) \\ & \geq \max_{\gamma \in F_i} \left( d \left( \phi \left(\gamma^\sigma c_i \right), \psi \left( \gamma^\sigma c_i \right) \right) - \frac{\epsilon}{4} \right) \\ &= \left( \max_{\gamma \in F_i}  d \left( \phi \left(\gamma^\sigma c_i \right), \psi \left( \gamma^\sigma c_i \right) \right) \right)- \frac{\epsilon}{4} \\ & \geq \frac{\epsilon}{2} - \frac{\epsilon}{4} = \frac{\epsilon}{4} .\end{align*}

It follows that $\{\phi(c_i): \phi \in U \}$ is an $\frac{\epsilon}{4}$-separated subset of $X$ with respect to $d_{F_i}$ of size $|U|$ and hence by Lemma \ref{lem4} we have \[|U| \leq \sep\left(X,\frac{\epsilon}{4},d_{F_i} \right) \leq \exp\left(\frac{\kappa |F_i|}{4} \right), \] and consequently \begin{equation} \label{eq20} |V_i| \leq \exp \left( \frac{\kappa |F_i|}{4} \right). \end{equation}

Now, take an $\frac{\epsilon}{2}$-spanning subset $V_P$ of $V$ of minimal cardinality with respect to $d^\infty_P$. Since a maximal $\frac{\epsilon}{2}$-separated subset is also $\frac{\epsilon}{2}$-spanning, we have \[ |V_P| \leq \sep\left( V, \frac{\epsilon}{2},d^\infty_P \right). \] For $i \in [n]$ write $\pi_i:X^{[n]} \to X$ for projection onto the $i$-coordinate. Note that if $S \subseteq V$ is $\frac{\epsilon}{2}$-separated with respect to $d^\infty_P$ then $\pi_i(S)$ is $\frac{\epsilon}{2}$-separated with respect to $d$ for each $i \in P$. Therefore $S$ is contained in a product of $|P|$ many $\frac{\epsilon}{2}$-separated subsets of $X$ and so \[ \sep\left( V, \frac{\epsilon}{2},d^\infty_P \right) \leq \sep \left(X,\frac{\epsilon}{2},d \right)^{|P|}  \] and thus $(\ref{eq18})$ implies \[|V_P| \leq \sep \left(X,\frac{\epsilon}{2},d \right)^{\eta n} \] and hence \begin{equation} \label{eq19} |V_P| \leq \exp\left( \frac{\kappa n}{4} \right) \end{equation} by our choice of $\eta$ in $(\ref{eq30})$.

\subsection{Conclusion}

Let $Z$ be the set of all maps $\phi:[n] \to X$ such that $\phi \rest P = \psi \rest P$ for some $\psi \in V_P$ and for each $i \leq j$ we have $\phi \rest B_i = \psi_i \rest B_i$ for some $\psi_i \in V_i$. Note that since we chose the sets $B_i = F^\sigma_i c_i$ to be pairwise disjoint, and the maps $\gamma \mapsto \gamma^\sigma c_i$ for $\gamma \in F_i$ are bijective, we have $\sum_{i=1}^j |F_i| \leq n$. Thus by $(\ref{eq20})$ and $(\ref{eq19})$ we have \begin{align} |Z| & \leq |V_P| \left( \prod_{i=1}^j |V_i| \right) \nonumber \\ & \leq \exp\left(\frac{\kappa n}{4} \right ) \left( \prod_{i=1}^j \exp \left(\frac{\kappa |F_i|}{4} \right)\right) \nonumber  \\ & = \exp\left(\frac{\kappa n}{4} + \frac{\kappa}{4} \left(\sum_{i=1}^j |F_i| \right) \right) \nonumber \\ & \leq \exp\left(\frac{\kappa n}{2} \right). \label{eq2} \end{align} Note that if $\phi \in V$, then by the hypothesis that $V_i$ is $\frac{\epsilon}{2}$-spanning for $V$ with respect to the metric $d^\infty_{B_i}$ we have that $\max_{b \in B_i} d(\phi(b),\psi_i(b)) \leq \frac{\epsilon}{2}$ for some element $\psi_i$ of $V_i$, and similarly for $P$ and $V_P$. Hence every element of $V$ is within $d^\infty$ distance $\frac{\epsilon}{2}$ of some element of $Z$. Define a map $f: V \to Z$ by letting $f(\phi)$ be any element of $Z$ within $d^\infty$ distance $\frac{\epsilon}{2}$ of $\phi$. Since $V$ is a subset of $D$ and we assumed that $D$ was $\epsilon$-separated with respect to $d^\infty$, it follows that $f$ is injective. Therefore we have $|V| \leq |Z|$. Then it follows from $(\ref{eq21})$ and $(\ref{eq2})$ that if $n \geq N$ then \begin{align*} \sep(\mathrm{Map}(F,\delta,\sigma_n),\epsilon,d^\infty) &= |D| \\& \leq \exp\left(\frac{\kappa n}{2} \right) |V| \\&  \leq \exp\left(\frac{\kappa n}{2} \right) |Z| \\ & \leq \exp\left(\frac{\kappa n}{2} \right) \exp\left( \frac{\kappa n}{2} \right) \\& = \exp\left(\kappa n \right).\end{align*} This concludes the proof of Theorem \ref{thm1}.

\bibliographystyle{plain}
\bibliography{bibliography}

Department of Mathematics\\
California Institute of Technology\\
Pasadena CA, 91125\\
\texttt{pjburton@caltech.edu}

\end{document}